\newtheorem{theorem}{Theorem}[section]
\newtheorem{openquestion}[theorem]{Question}
\newtheorem{corollary}[theorem]{Corollary}
\newtheorem{lemma}[theorem]{Lemma}
\newtheorem{proposition}[theorem]{Proposition}
\newtheorem{remark}[theorem]{Remark}
\numberwithin{equation}{section}
\newcommand{\interior}[1]{%
	{\kern0pt#1}^{\mathrm{o}}%
}
\newcommand{\BV}{\operatorname*{BV}}
\newcommand\reallywidehat[1]{%
	\savestack{\tmpbox}{\stretchto{%
			\scaleto{%
				\scalerel*[\widthof{\ensuremath{#1}}]{\kern-.6pt\bigwedge\kern-.6pt}%
				{\rule[-\textheight/2]{1ex}{\textheight}}%WIDTH-LIMITED BIG WEDGE
			}{\textheight}% 
		}{0.5ex}}%
	\stackon[1pt]{#1}{\tmpbox}%
}
\newcommand{\barint}{
	\rule[.036in]{.12in}{.009in}\kern-.16in \displaystyle\int }
\newcommand{\barcal}{\mbox{$ \rule[.036in]{.11in}{.007in}\kern-.128in\int $}}
\let\@wraptoccontribs\wraptoccontribs
\mathchardef\mhyphen="2D
\title{On the Fourier transform of measures in Besov spaces}
\author[R. Basak]{Riju Basak}
\address[R. Basak]{Department of Mathematics, National Taiwan Normal University, No. 88, Section 4, Tingzhou Road, Wenshan District, Taipei City, Taiwan 116, R.O.C.
}
\email{rijubasak52@ntnu.edu.tw}
\author[D. Spector]{Daniel Spector}
\address[D. Spector]{Department of Mathematics, National Taiwan Normal University, No. 88, Section 4, Tingzhou Road, Wenshan District, Taipei City, Taiwan 116, R.O.C.\\
	\newline
	and
	\newline
	National Center for Theoretical Sciences\\No. 1 Sec. 4 Roosevelt Rd., National Taiwan
	University\\Taipei, 106, Taiwan
	\newline
	and
	\newline
	Department of Mathematics, University of Pittsburgh, Pittsburgh, PA 15261 USA
}
\email{spectda@gapps.ntnu.edu.tw}
\author[D. Stolyarov]{Dmitriy Stolyarov}
\address[D. Stolyarov]{St. Petersburg State University, Department of Mathematics and Computer Science,14th Line 29b, Vasilyevsky Island, St. Petersburg, Russia, 199178}
\email{d.m.stolyarov@spbu.ru}
\begin{document}
	
	\maketitle
	
	\begin{abstract}
		We prove quantitative estimates for the decay of the Fourier transform of the Riesz potential of measures that are in homogeneous Besov spaces of negative exponent:		
		\begin{align*}
			\|\widehat{I_{\alpha}\mu}\|_{L^{p, \infty}} \leq C \|\mu\|_{M_b}^{\frac{1}{2}}\left(\sup_{t>0} t^{\frac{d-\beta}{2}}\|p_{t}\ast \mu\|_{\infty}\right)^{\frac{1}{2}},
		\end{align*}
		where $p=\frac{2d}{2\alpha+\beta}$ with $\beta \in (0,d)$ and $I_\alpha \mu$ is the Riesz potential of $\mu$ of order $\alpha \in ((d-\beta)/2,d-\beta/2)$.  Our results are naturally applicable to the Morrey space $\mathcal{M}^{\beta}$, including for example the Frostman measure $\mu_K$ of any compact set $K$ with $0<\mathcal{H}^\beta(K)<+\infty$ for some $\beta \in (0,d]$.  When $\mu=D\chi_E$ for $\chi_E \in \BV(\mathbb{R}^d)$, $\alpha =1$, and $\beta=d-1$, our results extend the work of Herz and Ko--Lee.  We provide examples which show the sharpness of our results.   
	\end{abstract}

	\section{Introduction}
	The starting point of our research is a result of Ko and Lee \cite[Theorem 1.3]{KL} on the decay of the Fourier transform of the characteristic function of a bounded, measurable set in $\mathbb{R}^d$ whose topological boundary satisfies a certain condition:  
	\begin{theorem}[Ko--Lee]\label{KL}
		Let $\gamma \in (0,d)$ and suppose $E \subset \mathbb{R}^d$ is a bounded, measurable set such that
		\begin{align}\label{boundary}
			\left| \{ x : \operatorname*{dist}(x,\partial E) <\delta\} \right| \lesssim  \delta^{d-\gamma}
		\end{align}
		for all $\delta>0$.  Then
		\begin{align*}
			\widehat{\chi_E} \in L^{\frac{2d}{2d-\gamma},\infty}(\mathbb{R}^d).
		\end{align*}
	\end{theorem}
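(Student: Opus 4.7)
The plan is to interpret the boundary condition (\ref{boundary}) as a quantitative smoothness condition on $\chi_E$ at scale $\delta$, and then to convert this, through Plancherel and a dyadic decomposition in frequency space, into the desired weak-$L^p$ decay of $\widehat{\chi_E}$.

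First I would fix a smooth radial mollifier $\phi\in C_c^\infty(\mathbb{R}^d)$ with $\int\phi=1$ and with $\hat\phi$ real and radially monotone, so that $|1-\hat\phi(\xi)|\geq c>0$ on $\{|\xi|\geq 1\}$ (a truncated Gaussian suffices). Set $\phi_\delta(x)=\delta^{-d}\phi(x/\delta)$. A short topological argument shows that if $\operatorname{dist}(x,\partial E)\geq\delta$ then $B_\delta(x)$ lies entirely in either $\operatorname{int}(E)$ or $\operatorname{int}(E^c)$, so $(\chi_E\ast\phi_\delta)(x)=\chi_E(x)$ off the $\delta$-neighborhood of $\partial E$. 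Since $|\chi_E-\chi_E\ast\phi_\delta|\leq 1$, hypothesis (\ref{boundary}) yields
\[
\|\chi_E-\chi_E\ast\phi_\delta\|_{L^2}^2 \;\lesssim\; \delta^{d-\gamma};
\]
in Besov language, $\chi_E\in B^{(d-\gamma)/2}_{2,\infty}(\mathbb{R}^d)$.

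Next, by Plancherel and the lower bound on $|1-\hat\phi(\delta\xi)|$ on $|\xi|\geq 1/\delta$, the choice $\delta=1/R$ yields the key frequency-side estimate
\[
\int_{|\xi|\geq R} |\widehat{\chi_E}(\xi)|^2\,d\xi \;\lesssim\; R^{-(d-\gamma)} \qquad (R\geq 1).
\]
For the weak-type conclusion I would dyadically decompose $\{\xi:|\widehat{\chi_E}(\xi)|>\lambda\}$ into annuli $\{|\xi|\sim 2^j\}$. On each annulus Chebyshev combined with the previous display gives a bound $\lesssim \lambda^{-2}2^{-j(d-\gamma)}$, while the trivial volume bound gives $\lesssim 2^{jd}$. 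Balancing the two at $2^{j_0}\sim\lambda^{-2/(2d-\gamma)}$ and summing (using $\gamma<d$ for convergence) yields
\[
|\{\xi:|\widehat{\chi_E}(\xi)|>\lambda\}| \;\lesssim\; \lambda^{-2d/(2d-\gamma)},
\]
which is the desired weak-$L^{2d/(2d-\gamma)}$ bound; the region $|\xi|\lesssim 1$ is harmless since $\|\widehat{\chi_E}\|_\infty\leq |E|<\infty$.

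The principal technical care lies in choosing the mollifier so that $|1-\hat\phi(\delta\xi)|$ is bounded below on $|\xi|\geq 1/\delta$, which is what allows Plancherel to transfer physical-side $L^2$ control into Fourier-side decay; without a real, monotone-symbol choice one would have to argue via a Littlewood--Paley decomposition instead. The Besov-style estimate of the first step and the dyadic summation of the third are otherwise routine.
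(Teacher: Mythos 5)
Your proposal is correct and follows essentially the same two-step route the paper attributes to Ko--Lee and mirrors in its own Theorem \ref{thm-frac-Y-main}: first convert the boundary condition \eqref{boundary} into membership of $\chi_E$ in $\dot{B}^{(d-\gamma)/2}_{2,\infty}$ (your mollifier-difference estimate is an equivalent characterization of the heat-kernel one used in the paper, and the lower bound $|1-\hat\phi(\delta\xi)|\geq c$ on $|\xi|\geq 1/\delta$ indeed holds for any nonnegative bump with unit integral since $|\hat\phi|<1$ off the origin and $\hat\phi\to 0$ at infinity), and then deduce the weak-type bound by Chebyshev on dyadic annuli with the balance $2^{j_0}\sim\lambda^{-2/(2d-\gamma)}$. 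The only cosmetic difference is that at low frequencies you use the trivial volume bound where the paper's general argument needs an $L^1$/Cauchy--Schwarz estimate to absorb the Riesz-potential weight $|\xi|^{-\alpha}$, which is absent here.
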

	\noindent
	Here and in the sequel $\lesssim$ means the left hand side is bounded by a constant depending on the parameters times the right hand side, while for any $1\leq p<\infty$, $L^{p,\infty}(\mathbb{R}^d)$ denotes weak-$ L^{p}(\mathbb{R}^d)$, the Marcinkiewicz space -- the set of measurable $g$ for which the quasi-norm
	\begin{align*}
		\|g\|_{L^{p, \infty}(\mathbb{R}^d)}:=\sup_{t>0} t |\{x \in \mathbb{R}^d : |g(x)|>t\}|^{\frac{1}{p}}
	\end{align*}
	is finite.

	As they observe in \cite[Corollary 1.4]{KL}, it follows that for a bounded set $E\subset \mathbb{R}^d$ with Lipschitz boundary one has
	\begin{align}\label{Lipschitz}
		\widehat{\chi_E} \in L^{\frac{2d}{d+1},\infty}(\mathbb{R}^d),
	\end{align}
	in particular because such sets satisfy \eqref{boundary} with $\gamma=d-1$.  Moreover, an examination of the proof in \cite{KL} shows that \eqref{Lipschitz} comes with a quantitative estimate that for a bounded, measurable set $E \subset \mathbb{R}^d$ with Lipschitz boundary one has
	\begin{align}\label{Lipschitz_quant}
		\|\widehat{\chi_E}\|_{L^{\frac{2d}{d+1},\infty}(\mathbb{R}^d)} \lesssim \mathcal{H}^{d-1}(\partial E)^{1/2},
	\end{align}
	where $\mathcal{H}^{d-1}$ is the Hausdorff measure of dimension $d-1$.  
	
	In view of the development of the theory of sets of finite perimeter, the recording of the inequality \eqref{Lipschitz_quant} immediately suggests that one might try to prove such an inequality for the larger class of sets of finite perimeter, where in place of the Hausdorff measure of the topological boundary one utilizes its perimeter in the sense of De Giorgi.  This idea of a replacement of the assumption \eqref{boundary} related to the topological boundary with a measure theoretic or functional analytic assumption is the main theme of this paper, one which we will shortly see bear fruit in several ways.  The primary objects of interest for us are those elements in the space of finite Radon measures $M_b(\mathbb{R}^d)$ that are also in the homogeneous Besov space of order $\beta-d$ for $\beta \in (0,d]$:
	\begin{align*}
		\dot{B}^{\beta-d}_{\infty,\infty}(\mathbb{R}^d)= \left\{ f \in \mathcal{S}'(\mathbb{R}^d): \|f\|_{\dot{B}^{\beta-d}_{\infty,\infty}} :=  \sup_{t>0} t^{\frac{d-\beta}{2}}\|p_{t}\ast f\|_{\infty}< + \infty\right\}.
	\end{align*}
	Here $p_t$ denotes the standard Euclidean heat kernel
	\begin{align*}
		p_t(x)= \frac{1}{(4\pi t)^{d/2}} e^{-\frac{|x|^2}{4t}} , \, x\in \mathbb{R}^d, \, t>0.
	\end{align*}
	Our choice to introduce the spaces in the parameter $\beta-d$ is related to the numerology of the dimension of these objects.  Here we recall that one has the embedding
	\begin{align}\label{Peetre-embedding}
		\mathcal{M}^\beta(\mathbb{R}^d) \hookrightarrow \dot{B}^{\beta-d}_{\infty,\infty}(\mathbb{R}^d),
	\end{align}
	for $\mathcal{M}^\beta(\mathbb{R}^d)$ the Morrey space given by
	\begin{align*}
		\mathcal{M}^\beta(\mathbb{R}^d) := \left\{ \mu \in \mathcal{M}_{loc}(\mathbb{R}^d) : \sup_{x\in \mathbb{R}^d, r>0} \frac{|\mu|(B(x,r))}{r^\beta}<+\infty\right\},
	\end{align*}
see e.g.~ \cite[Example 7 on p.~165]{PeetreBook}.  Here and in what follows, $B(x,r)$ denotes the open Euclidean ball centered at $x$ of radius $r>0$.  In particular, our work is directly applicable to the elements in the Morrey space of measures of dimension $\beta \in (0,d)$, a point we return to later. 
	
	The first aspect of our viewpoint of these estimates from the perspective of function spaces is a positive result, our 
	\begin{theorem}\label{thm-frac-Y-main}
		Let $\mu$ be finite Radon measure such that $\mu \in \dot{B}^{\beta-d}_{\infty,\infty}(\mathbb{R}^d)$ for $\beta \in (0, d]$. Then for $\alpha \in (0,d)$ such that $\frac{d-\beta}{2}< \alpha < d-\frac{\beta}{2}$, there exist a constant $C=C(d,\alpha, \beta)$ such that 
		\begin{align}\label{eq-reqularity-main}
			\|\widehat{I_{\alpha}\mu}\|_{L^{\frac{2d}{2\alpha+\beta}, \infty}} \leq C \|\mu\|_{M_b}^{\frac{1}{2}} \|\mu\|_{\dot{B}^{\beta-d}_{\infty,\infty}}^{\frac{1}{2}}.
		\end{align}
	\end{theorem}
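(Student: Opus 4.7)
The plan is to use the Fourier-analytic identity $\widehat{I_{\alpha}\mu}(\xi) = c_{d,\alpha}|\xi|^{-\alpha}\widehat{\mu}(\xi)$ to reduce the task to a weak-$L^p$ bound on $|\xi|^{-\alpha}\widehat{\mu}(\xi)$ with $p = 2d/(2\alpha+\beta)$, and then attack this by a dyadic decomposition in the frequency variable $|\xi|$.

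First I would establish the annular $L^2$-bound
\begin{equation*}
\int_{R \leq |\xi| \leq 2R}|\widehat{\mu}(\xi)|^2\,d\xi \lesssim R^{d-\beta} \|\mu\|_{M_b}\|\mu\|_{\dot{B}^{\beta-d}_{\infty,\infty}}, \qquad R > 0.
\end{equation*}
Setting $s = 1/(2R^2)$, Plancherel applied to $p_s \ast \mu \in L^1 \cap L^\infty \subset L^2$ gives $\int e^{-2s|\xi|^2}|\widehat{\mu}(\xi)|^2\,d\xi = \|p_s \ast \mu\|_2^2$. On the annulus the Gaussian weight is bounded below by a positive constant, while
$\|p_s \ast \mu\|_2^2 \leq \|p_s \ast \mu\|_\infty \|p_s \ast \mu\|_1 \leq s^{-(d-\beta)/2}\|\mu\|_{\dot{B}^{\beta-d}_{\infty,\infty}}\|\mu\|_{M_b}$
by the heat-kernel characterization of the Besov norm, Young's inequality, and the normalization $\|p_s\|_1 = 1$.

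Writing $A := \|\mu\|_{M_b}\|\mu\|_{\dot{B}^{\beta-d}_{\infty,\infty}}$ and $E_\lambda := \{\xi : |\xi|^{-\alpha}|\widehat{\mu}(\xi)|>\lambda\}$, I would next split the super-level set at a free parameter $R_0 > 0$: the inner part is controlled by the trivial volume estimate $|E_\lambda \cap B(0,R_0)| \lesssim R_0^d$, while Chebyshev on dyadic shells $\{2^k \leq |\xi| < 2^{k+1}\}$ combined with the previous step yields
\begin{align*}
|E_\lambda \setminus B(0,R_0)|
&\leq \sum_{2^k \geq R_0} \lambda^{-2}2^{-2k\alpha}\int_{2^k \leq |\xi| < 2^{k+1}}|\widehat{\mu}|^2\,d\xi \\
&\lesssim \lambda^{-2} A \sum_{2^k \geq R_0} 2^{k(d-\beta-2\alpha)} \lesssim \lambda^{-2} A \, R_0^{d-\beta-2\alpha},
\end{align*}
where the geometric series converges exactly because the hypothesis $\alpha > (d-\beta)/2$ forces $d-\beta-2\alpha < 0$.

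Finally, balancing the two contributions with the choice $R_0^{2\alpha+\beta} = A\lambda^{-2}$ gives $|E_\lambda| \lesssim A^{p/2}\lambda^{-p}$, which is exactly $\lambda |E_\lambda|^{1/p} \lesssim A^{1/2}$, i.e.~\eqref{eq-reqularity-main}. The complementary hypothesis $\alpha < d-\beta/2$ is used only to ensure $p > 1$. I expect the main obstacle to be the annular $L^2$-estimate above: it couples the total variation of $\mu$ (which controls $\widehat{\mu}$ in $L^\infty$) with the Besov norm (which controls $p_s\ast \mu$ in $L^\infty$), and the bookkeeping that extracts the correct power $R^{d-\beta}$ through the heat semigroup is the central insight; once it is in hand, the dyadic level-set decomposition and the optimization in $R_0$ that follow are entirely routine.
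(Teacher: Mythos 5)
Your proposal is correct, and its core is identical to the paper's argument: the annular $L^2$ bound you isolate is exactly the paper's Lemma \ref{lem-L2-average} (proved the same way, via $\|p_s\ast\mu\|_2^2\le\|p_s\ast\mu\|_1\|p_s\ast\mu\|_\infty$, Plancherel, and the lower bound on the Gaussian weight), and your treatment of the high frequencies by Chebyshev at level $L^2$ over dyadic shells is the paper's estimate of $I_2$, with the same use of $\alpha>(d-\beta)/2$ and the same optimization in the splitting radius. The one genuine difference is the low-frequency part: you dispose of $E_\lambda\cap B(0,R_0)$ by the trivial volume bound $|B(0,R_0)|\lesssim R_0^d$, whereas the paper bounds $I_1$ by Chebyshev at level $L^1$ on each shell followed by Cauchy--Schwarz back to the $L^2$ average, which produces a convergent geometric series only because $\alpha<d-\beta/2$. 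Your variant is slightly more elementary and, as you note, relegates the hypothesis $\alpha<d-\beta/2$ to ensuring $p=\frac{2d}{2\alpha+\beta}>1$ rather than making the argument converge; after optimizing in $R_0$ the two low-frequency bounds coincide, so nothing is lost. (Minor bookkeeping: with the paper's Fourier convention the Gaussian weight is $e^{-8\pi^2 s|\xi|^2}$ rather than $e^{-2s|\xi|^2}$, which only changes the constant in the lower bound on the annulus.)
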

	\noindent
	Here, $ I_\alpha $ denotes the Riesz potential of order $\alpha \in (0,d)$, whose action on a finite Radon measure $\mu$ is given by
	\begin{align*}
		I_\alpha \mu(x) = \frac{1}{\gamma(\alpha)} \int_{\mathbb{R}^d} \frac{d\mu(y)}{|x - y|^{d - \alpha}},
	\end{align*}
	where $\gamma(\alpha)$ is a normalization constant, see \cite[p.~117]{Stein}.  With the convention
	\begin{align*}
		\widehat{\mu}(\xi) = \int_{\mathbb{R}^d} e^{-2\pi i x \cdot \xi} \;d\mu(x)
	\end{align*}
for the Fourier transform of $\mu \in M_b(\mathbb{R}^d)$, the Fourier transform of $I_\alpha \mu$ is given by 
	\begin{align*}
		\widehat{I_{\alpha}\mu}(\xi)= (2\pi |\xi|)^{-\alpha}\widehat{\mu}(\xi),
	\end{align*}
in the sense of distributions, see \cite[Lemma 2 on p.~117]{Stein}.

\begin{remark}
The range of $\alpha$ admissible in Theorem \ref{thm-frac-Y-main} is related to the exponent $p=\frac{2d}{2\alpha+\beta}$, as this is the range for which $p \in (1,2)$.  This is, in part, related to our method of proof as an interpolation between an $L^1$ and an $L^2$ estimate, and in part to fundamental obstructions for larger values of $p$.  In particular, for $\alpha=0$, $\beta = d-1$, the validity of the estimate would imply that for $f \in L^\infty(S^{d-1})$ one could take $\mu = f\mathcal{H}^{d-1}\llcorner S$ for any $S \subset \mathbb{R}^d$ such that $\mathcal{H}^{d-1}(S)<+\infty$ and obtain
\begin{align*}
\|\widehat{f\mathcal{H}^{d-1}\llcorner S}\|_{L^{\frac{2d}{d-1}, \infty}(\mathbb{R}^d)} \leq C\|f\|_{L^\infty(S)},
\end{align*}
i.e.~the adjoint restriction estimate for $S$.  As no assumptions have been made on the curvature of $S$, such an estimate cannot hold in general, for example when $S$ is the boundary of a cube.
\end{remark}

The framework we adopt of finite Radon measures in negative homogeneous Besov spaces includes sets of finite perimeter:  For $\chi_E \in M_b(\mathbb{R}^d)$, write $D\chi_E$ for the distributional derivative of $\chi_E$.  For a set of finite perimeter one assumes $D\chi_E \in M_b(\mathbb{R}^d)$, i.e. that its total variation $|D\chi_E|(\mathbb{R}^d)$ is finite, so that if we let $\nabla p_t$ to denote the classical gradient of the heat kernel we observe that
	\begin{align}\label{charBV_byparts}
		\|p_t \ast D\chi_E\|_{L^\infty(\mathbb{R}^d)} = \|\nabla p_t \ast \chi_E\|_{L^\infty(\mathbb{R}^d)} \leq ct^{-1/2},
	\end{align}
	i.e. $D\chi_E \in \dot{B}^{-1}_{\infty,\infty}(\mathbb{R}^d)$.  In particular, an application of Theorem \ref{thm-frac-Y-main} with $\beta=d-1$, $\alpha=1$ yields
	\begin{corollary}\label{finite-perimeter}
		Let $E \subset \mathbb{R}^d$ such that $\chi_{E} \in \BV(\mathbb{R}^d)$. Then there exists a positive constant $C=C(d)$ such that 
		\begin{align*}
			\|\widehat{\chi_{E}}\|_{L^{\frac{2d}{d+1}, \infty}} \leq \, C |D\chi_E|(\mathbb{R}^d)^{1/2}.
		\end{align*}
	\end{corollary}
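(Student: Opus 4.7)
The plan is to deduce the corollary as a direct application of Theorem \ref{thm-frac-Y-main} with the parameters $\alpha=1$, $\beta=d-1$, applied componentwise to the vector-valued measure $D\chi_E=(D_1\chi_E,\ldots,D_d\chi_E)$. For these choices, one has $p=\tfrac{2d}{2\alpha+\beta}=\tfrac{2d}{d+1}$ and the admissibility condition $\tfrac{d-\beta}{2}<\alpha<d-\tfrac{\beta}{2}$ reads $\tfrac12<1<\tfrac{d+1}{2}$, which holds for every $d\geq 2$.

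First I would verify that each scalar component $D_j\chi_E$ satisfies the two hypotheses of Theorem \ref{thm-frac-Y-main}. Total variation control is immediate from $\|D_j\chi_E\|_{M_b}=|D_j\chi_E|(\mathbb{R}^d)\leq |D\chi_E|(\mathbb{R}^d)$. For the Besov estimate, I would use the distributional identity $p_t\ast D_j\chi_E = \partial_j p_t \ast \chi_E$ together with Young's inequality and the elementary bound $\|\partial_j p_t\|_{L^1(\mathbb{R}^d)} \lesssim t^{-1/2}$ (this is the content of \eqref{charBV_byparts}), to conclude that
\begin{equation*}
\|D_j\chi_E\|_{\dot{B}^{-1}_{\infty,\infty}} \leq c_d,
\end{equation*}
with $c_d$ a purely dimensional constant. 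Feeding these two bounds into Theorem \ref{thm-frac-Y-main} gives, for each $j$,
\begin{equation*}
\|\widehat{I_1 D_j\chi_E}\|_{L^{\frac{2d}{d+1},\infty}} \leq C |D\chi_E|(\mathbb{R}^d)^{1/2}.
\end{equation*}

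To reassemble, I would exploit the Fourier multiplier identities $\widehat{D_j\chi_E}(\xi)=2\pi i\xi_j\widehat{\chi_E}(\xi)$ and $\widehat{I_1\mu}(\xi)=(2\pi|\xi|)^{-1}\widehat{\mu}(\xi)$ (recalled in the excerpt) to obtain the pointwise identity
\begin{equation*}
\widehat{I_1 D_j\chi_E}(\xi)=i\frac{\xi_j}{|\xi|}\widehat{\chi_E}(\xi),\qquad j=1,\ldots,d,
\end{equation*}
from which $|\widehat{\chi_E}(\xi)|^2=\sum_{j=1}^d|\widehat{I_1 D_j\chi_E}(\xi)|^2$. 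Therefore $|\widehat{\chi_E}(\xi)|\leq\sqrt{d}\max_j|\widehat{I_1D_j\chi_E}(\xi)|$ pointwise, and the quasi-triangle inequality in $L^{2d/(d+1),\infty}$ (valid with a constant depending only on $d$ since $\tfrac{2d}{d+1}>1$) combined with the previous display gives the claimed estimate.

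I do not anticipate a genuine obstacle: the substance of the argument is Theorem \ref{thm-frac-Y-main} itself, and what remains is essentially bookkeeping. The only items requiring mild care are the vector-valued nature of $D\chi_E$ (handled by passing to scalar components and paying a factor of $\sqrt{d}$) and the use of the weak-type triangle inequality (innocuous since $p>1$). The low-dimensional case $d=1$ falls outside the admissibility range, but there $\widehat{\chi_E}\in L^{1,\infty}$ follows directly from the elementary bound $|\widehat{\chi_E}(\xi)|\leq |D\chi_E|(\mathbb{R})/(2\pi|\xi|)$.
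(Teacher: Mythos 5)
Your proof is correct and follows essentially the same route as the paper's: apply Theorem \ref{thm-frac-Y-main} with $\alpha=1$, $\beta=d-1$ to $D\chi_E$, then recover $\widehat{\chi_E}$ from $\widehat{I_1 D\chi_E}$ via the Fourier multiplier identity; the paper simply treats $D\chi_E$ as a single vector-valued measure and uses Cauchy--Schwarz to get the pointwise bound $|\widehat{\chi_E}(\xi)|\le |\widehat{I_1(D\chi_E)}(\xi)|$, which makes your componentwise decomposition and the weak-type quasi-triangle inequality unnecessary. (Minor caveat: your $d=1$ aside yields only a bound linear in $|D\chi_E|(\mathbb{R})$ rather than the stated square root, but the paper does not address $d=1$ either, since $\beta=d-1$ must lie in $(0,d]$.)
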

	
	A slight variation of the preceding argument yields a spectrum of results for various orders of differentiability of a set $E$ that one can compare with \cite[Theorem 1.5]{KL}:
	\begin{corollary}\label{Sobolev-function}
		Let $E \subset \mathbb{R}^d$ such that $\chi_{E} \in W^{\eta, p}(\mathbb{R}^d)$ with $0<\eta <1$, $1\leq p <\infty$, and $\eta p <1$, then there exists a positive constant $C=C(\eta,p, d)$ such that
		\begin{align*}
			\|\widehat{\chi_{E}}\|_{L^{\frac{2d}{d+\eta p}, \infty}} \leq C \|\chi_{E}\|_{W^{\eta, p}}^{p/2}.
		\end{align*}
	\end{corollary}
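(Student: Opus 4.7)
My plan is to apply Theorem~\ref{thm-frac-Y-main} to the fractional Laplacian $\mu := (-\Delta)^{\eta p/2}\chi_E$, with the parameter choice $\alpha = \eta p$ and $\beta = d - \eta p$. This choice matches the target exponent $\tfrac{2d}{2\alpha+\beta}=\tfrac{2d}{d+\eta p}$, places $\beta\in(d-1,d]\subset(0,d]$, and renders the admissibility $\tfrac{d-\beta}{2}<\alpha<d-\tfrac{\beta}{2}$ as $\tfrac{\eta p}{2}<\eta p<\tfrac{d+\eta p}{2}$, which is automatic since $\eta p<1\le d$. On the Fourier side the multipliers of $I_{\eta p}$ and $(-\Delta)^{\eta p/2}$ cancel, so $\widehat{I_{\eta p}\mu} = \widehat{\chi_E}$ as tempered distributions, and the whole task is reduced to bounding $\|\mu\|_{M_b}$ by $\|\chi_E\|_{W^{\eta,p}}^{p}$ and showing $\|\mu\|_{\dot{B}^{-\eta p}_{\infty,\infty}}\lesssim 1$.

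For the $M_b$ bound I rely on a self-improvement peculiar to characteristic functions: because $\chi_E$ is $\{0,1\}$-valued, $|\chi_E(x)-\chi_E(y)|^p=|\chi_E(x)-\chi_E(y)|$ and $\chi_E^p=\chi_E$, whence
\[
[\chi_E]_{W^{\eta,p}}^p=[\chi_E]_{W^{\eta p,1}},\qquad \|\chi_E\|_{L^p}^p=\|\chi_E\|_{L^1},
\]
so $\chi_E\in W^{\eta p,1}(\mathbb{R}^d)$ with $\|\chi_E\|_{W^{\eta p,1}}=\|\chi_E\|_{W^{\eta,p}}^p$. Since $\eta p\in(0,1)$, the singular-integral representation $(-\Delta)^{\eta p/2}\chi_E(x)=c_{d,\eta p}\int\tfrac{\chi_E(x)-\chi_E(y)}{|x-y|^{d+\eta p}}\,dy$ converges absolutely at a.e.~$x$ by Tonelli applied to the finite quantity $[\chi_E]_{W^{\eta p,1}}$, so $\|\mu\|_{M_b}\le\|\mu\|_{L^1}\lesssim\|\chi_E\|_{W^{\eta,p}}^p$. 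For the Besov bound I move the Laplacian onto the kernel, $p_t*\mu=((-\Delta)^{\eta p/2}p_t)*\chi_E$, and use the scaling $(-\Delta)^{\eta p/2}p_t(x)=t^{-(d+\eta p)/2}F(x/\sqrt{t})$ with $F=(-\Delta)^{\eta p/2}p_1\in L^1(\mathbb{R}^d)$ to get $\|(-\Delta)^{\eta p/2}p_t\|_{L^1}\lesssim t^{-\eta p/2}$. Young's inequality then yields $\|p_t*\mu\|_{L^\infty}\lesssim t^{-\eta p/2}\|\chi_E\|_{L^\infty}\lesssim t^{-\eta p/2}$, i.e.\ $\|\mu\|_{\dot{B}^{-\eta p}_{\infty,\infty}}\lesssim 1$ with a constant depending only on $d$ and $\eta p$. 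Feeding both estimates into Theorem~\ref{thm-frac-Y-main} delivers the desired $\|\widehat{\chi_E}\|_{L^{2d/(d+\eta p),\infty}}\lesssim\|\chi_E\|_{W^{\eta,p}}^{p/2}$.

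The main conceptual obstacle is precisely the self-improvement step $W^{\eta,p}\cap\{\text{char.\ functions}\}\hookrightarrow W^{\eta p,1}$ together with the realisation of $(-\Delta)^{\eta p/2}\chi_E$ as a genuine $L^1$ function rather than only a distribution; the hypothesis $\eta p<1$ is indispensable here, since it removes the need for a principal value in the singular-integral definition and lets the identity $I_{\eta p}\mu=\chi_E$ be read off directly from the Fourier multipliers without any regularisation argument. The remaining ingredients — heat-kernel scaling and the bookkeeping of parameters in Theorem~\ref{thm-frac-Y-main} — are routine.
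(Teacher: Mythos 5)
Your proposal is correct and follows essentially the same route as the paper: both apply Theorem~\ref{thm-frac-Y-main} to $\mu=(-\Delta)^{\eta p/2}\chi_E$ with $\alpha=\eta p$, $\beta=d-\eta p$, use the $\{0,1\}$-valuedness of $\chi_E$ to identify $\|(-\Delta)^{\eta p/2}\chi_E\|_{L^1}$ with $\|\chi_E\|_{W^{\eta,p}}^p$, and obtain the Besov bound from the heat-kernel scaling $\|(-\Delta)^{\eta p/2}p_t\|_{L^1}\lesssim t^{-\eta p/2}$ (the paper's Lemma~\ref{lem-higher-fraclaplace-measure}). The only cosmetic difference is that you bound the $L^1$ norm by Tonelli, whereas the paper notes the pointwise sign of $\chi_E(x)-\chi_E(y)$ in $y$ to get an exact identity; both suffice.
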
	
	\noindent
	Here, $W^{\eta, p}(\mathbb{R}^d)$ is the space defined by the Gagliardo--Slobodeskii semi-norm
	\begin{align*}
		\|\chi_{E}\|^p_{W^{\eta, p}}:= \int_{\mathbb{R}^d} \int_{\mathbb{R}^d} \frac{|\chi_E(x) - \chi_E(y)|^p}{|x-y|^{d+\eta p}}\;dxdy.
	\end{align*}

	A second aspect of our functional analytic perspective is that it gives a unified approach from which to view Ko and Lee's results \cite{KL} alongside the classical work of Herz \cite{Herz} and subsequent contribution of Brandolini, Colzani, and Travaglini \cite{BCT} that they build upon.  Here it is useful to break down these several results into two implications.  The first of these implications is the translation of assumptions on $E$ into its inclusion in a Besov space:
	\begin{align}\label{unifying_idea}
		\text{Assumptions on } E \implies \chi_E \in \dot{B}^{\frac{d-\gamma}{2}}_{2,\infty}(\mathbb{R}^d)
	\end{align}
	for some $\gamma \in (0,d)$.  We recall that one equivalent semi-norm on the homogeneous Besov space $\dot{B}^{\frac{d-\gamma}{2}}_{2,\infty}(\mathbb{R}^d)$, which is easy to relate with our results is given by
	\begin{align*}
		\|a\|_{\dot{B}^{\frac{d-\gamma}{2}}_{2,\infty}} := \sup_{t>0} t^{(\gamma-d)/4} \|p_{2t} \ast a - p_t\ast a\|_{L^2},
	\end{align*}
	though a more modern approach to harmonic analysis and Besov spaces would replace $(p_{2t}-p_t)$ with a compactly supported Littlewood-Paley projector (see \cite[p. 93]{Triebel}).  
	
	In the work of Herz, $\gamma =d-1$, and \eqref{unifying_idea} is a consequence of \cite[Theorem 2]{Herz} (see also \cite{BG} for a similar estimate and asymptotics for $u \in \BV(\mathbb{R}^d) \cap L^\infty(\mathbb{R}^d)$):
	\begin{align}\label{L2_decay_boundary}
		\int_{B(0,R)} |\xi|^2 |\widehat{\chi_E}(\xi)|^2\;d\xi \lesssim R\;\mathcal{H}^{d-1}(\partial E).
	\end{align}
Brandolini, Colzani, and Travaglini also treat the case $\gamma =d-1$, while Ko and Lee establish such an embedding for any $\gamma \in (0,d)$, where the corresponding translation of regularity is \cite[Lemma 2.10]{BCT}, \cite[Lemma 2.4]{KL}:
	\begin{align*}
		\| \chi_E\|_{ \dot{B}^{\frac{d-\gamma}{2}}_{2,\infty}} \lesssim
		\sup_{\delta>0} \frac{\left| \{ x : \operatorname*{dist}(x,\partial E) <\delta\} \right|}{  \delta^{d-\gamma}}.
	\end{align*}
	This analysis reveals that the assumption \eqref{boundary} is only meaningful for $\gamma \in [d-1,d)$, as the argument of Sickel \cite[p.~401]{Sickel} shows that for $E$ a measurable set, $0<|E|<+\infty$ is in contradiction with $\chi_E \in \dot{B}^{\eta}_{2,\infty}(\mathbb{R}^d)$ for any $\eta>1/2$.  
	
	This highlights a third useful aspect of our framework, that in place of estimates for the Fourier transform of the characteristic function of measurable sets with certain regularity assumptions, one can utilize measures associated to sets.  For example, given $\beta \in (0,d)$ and a compact set $K\subset \mathbb{R}^d$ with $0<\mathcal{H}^\beta(K)<+\infty$, Frostman's lemma \cite[p. 18]{Mattila} yields a probability measure $\mu_K$ for which
	\begin{align*}
		\|\mu_K\|_{\mathcal{M}^\beta}:=\sup_{x\in \mathbb{R}^d, r>0} \frac{\mu_K(B(x,r))}{r^\beta}<+\infty.
	\end{align*}

	Then the embedding noted in \eqref{Peetre-embedding} yields that such a measure is admissible in our framework and thus from our results one deduces an estimate for the decay of the Fourier transform of $I_\alpha \mu_K$ for any such Frostman measure on a set of dimension $\beta$, for a suitable range of $\alpha>0$.  
	
	The second of the implications of the papers \cite{Herz, KL} is the translation of Besov regularity of a function into the decay of its Fourier transform.  The sharp result here is Ko and Lee's establishment of the boundedness of the map (see \cite[Proof of Theorem 1.3 on p.~1104]{KL})
	\begin{align}\label{KoLeeBesov-to-WeakLp}
		\mathcal{F} : \dot{B}_{2,\infty}^{\frac{d-\gamma}{2}}(\mathbb{R}^d) &\to L^{\frac{2d}{2d-\gamma},\infty}(\mathbb{R}^d).
	\end{align}
	Their work extends Herz's work in two directions, firstly in the parameter $\gamma \in [d-1,d)$ and secondly that decay on annuli not only leads to an inclusion in $L^p$ for $p>\frac{2d}{2d-\gamma}$ (which is remarked by Herz just after Theorem 2, see \cite[p.~82]{Herz}), but that one has a weak-type estimate at the endpoint.  Our contribution is the observation that it is useful to mediate between a set $E$ and its derivative via the introduction of the Riesz potential, which is easily incorporated into our analysis via the known Riesz potential embedding
	\begin{align*}
		I_\alpha :  \dot{B}_{2,\infty}^{\frac{\beta-d}{2}}(\mathbb{R}^d) &\to  \dot{B}_{2,\infty}^{\frac{\beta-d}{2}+\alpha}(\mathbb{R}^d).
	\end{align*}
	This integrates easily with Ko and Lee's embedding \eqref{KoLeeBesov-to-WeakLp} to obtain our results, where one replaces $\dot{B}_{2,\infty}^{\frac{d-\gamma}{2}}(\mathbb{R}^d)$ in \eqref{KoLeeBesov-to-WeakLp} with $\dot{B}_{2,\infty}^{\frac{\beta-d}{2}+\alpha}(\mathbb{R}^d)$.  This explains the restriction on $\alpha$, as one requires $\frac{\beta-d}{2}+\alpha>0$ for the validity of an analogue of \eqref{KoLeeBesov-to-WeakLp}.

	A fourth aspect of our work relates to sharpness.  In particular, in the work of Ko and Lee \cite[p.~1099]{KL} they note that for $\gamma=d-1$ the exponent $\frac{2d}{d+1}$ is optimal, and while it seems to be true for other values they are not able to construct an example.  We establish here the sharpness of our results, as a consequence of the following theorems.  We first prove that for every $\beta \in (0,d]$, the exponent $p=\frac{2d}{2\alpha+\beta}$ cannot be improved in
	\begin{theorem}\label{sharp_p}
		Let $\beta \in (0, d]$ and $\alpha \in (0,d)$ be such that $\frac{d-\beta}{2}< \alpha < d-\frac{\beta}{2}$.  Then for any $r<\frac{2d}{2\alpha+\beta}$ there exists a sequence of functions $\Phi_N \in M_b \cap \dot{B}^{\beta-d}_{\infty,\infty}(\mathbb{R}^d)$ such that
		\begin{align*}
			\|\Phi_N\|_{M_b}, \|\Phi_N\|_{\dot{B}^{\beta-d}_{\infty,\infty}} \lesssim 1
		\end{align*}
		and
		\begin{align*}
			\|\widehat{I_{\alpha}\Phi_N}\|_{L^{r, \infty}} \to \infty.
		\end{align*}
	\end{theorem}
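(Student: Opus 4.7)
The plan is to produce $\Phi_N$ whose Fourier transform is of order $|\xi|^{-\beta/2}$ on a positive-density subset of every dyadic annulus $|\xi|\sim R$ with $1\lesssim R\lesssim N$; this is the profile saturating the Plancherel-type annular inequality
\[
\int_{|\xi|\sim R}|\widehat\mu(\xi)|^2\,d\xi \;\lesssim\; R^{d-\beta}\|\mu\|_{M_b}\|\mu\|_{\dot{B}^{\beta-d}_{\infty,\infty}},
\]
which comes by convolving $\mu$ with $p_{1/R^2}$ and combining Plancherel with the trivial bound $\|p_{1/R^2}\ast\mu\|_{L^2}^2\leq\|p_{1/R^2}\ast\mu\|_\infty\|\mu\|_{M_b}$. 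After multiplication by $|\xi|^{-\alpha}$ such a profile forces $\|\widehat{I_\alpha\Phi_N}\|_{L^{r,\infty}}\gtrsim R^{d/r-\alpha-\beta/2}$ at scale $R\sim N$, a positive power of $N$ precisely when $r<p=2d/(2\alpha+\beta)$.

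For the construction, for each $\beta\in(0,d]$ I would fix a compactly supported Salem--Frostman probability measure $\mu_\beta$, that is, a probability measure satisfying both the Frostman condition $\mu_\beta(B(x,r))\lesssim r^\beta$ and the Salem Fourier bound $|\widehat{\mu_\beta}(\xi)|\lesssim|\xi|^{-\beta/2}$. Existence for every $\beta\in(0,d]$ is classical (Salem's random Cantor construction for generic $\beta\in(0,d)$; normalized Lebesgue measure on $[0,1]^d$ for $\beta=d$; the uniform surface measure on $S^{d-1}$ for $\beta=d-1$). Set $\Phi_N:=\mu_\beta\ast\eta_{1/N}$, where $\eta_s(x):=s^{-d}\eta(x/s)$ rescales a fixed nonnegative Schwartz mollifier $\eta$ normalized so that $\int\eta=1$ and $\widehat\eta\geq\tfrac12$ on $B(0,1)$. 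Young's inequality gives $\|\Phi_N\|_{M_b}\leq 1$, and the embedding \eqref{Peetre-embedding} together with the preservation of $\dot{B}^{\beta-d}_{\infty,\infty}$ under convolution with an $L^1$-normalized mollifier yields $\|\Phi_N\|_{\dot{B}^{\beta-d}_{\infty,\infty}}\lesssim\|\mu_\beta\|_{\mathcal{M}^\beta}\lesssim 1$, uniformly in $N$.

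The lower bound on $|\widehat{\mu_\beta}|$ comes from the divergence of the critical $\beta$-energy $I_\beta(\mu_\beta)=c_\beta\int|\widehat{\mu_\beta}|^2|\xi|^{\beta-d}\,d\xi$; a direct layer-cake computation using only the Frostman upper bound shows $I_\beta(\mu_\beta)=+\infty$ for any Frostman probability measure of Hausdorff dimension exactly $\beta$. Combined with the annular upper bound of the first paragraph, this divergence forces
\[
\int_{|\xi|\sim R_k}|\widehat{\mu_\beta}(\xi)|^2\,d\xi\;\gtrsim\; R_k^{d-\beta}(\log R_k)^{-2}
\]
along an unbounded sequence of dyadic scales $R_k$, since a divergent sum of uniformly bounded summands cannot have summands of size $o(1/k^2)$. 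Paired with the pointwise Salem upper bound $|\widehat{\mu_\beta}|\lesssim R^{-\beta/2}$ on the annulus, Chebyshev extracts $A_{R_k}\subset\{|\xi|\sim R_k\}$ with $|A_{R_k}|\gtrsim R_k^d(\log R_k)^{-O(1)}$ on which $|\widehat{\mu_\beta}(\xi)|\gtrsim R_k^{-\beta/2}(\log R_k)^{-O(1)}$. For $R_k\leq cN$ we also have $|\widehat{\eta_{1/N}}|\gtrsim 1$ there, whence
\[
\|\widehat{I_\alpha\Phi_N}\|_{L^{r,\infty}}\;\gtrsim\; R_k^{-\alpha-\beta/2}|A_{R_k}|^{1/r}\;\gtrsim\; R_k^{d/r-\alpha-\beta/2}(\log R_k)^{-O(1)},
\]
which diverges as $R_k\sim N\to\infty$ whenever $r<p$.

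The main obstacle is this matching lower bound on the annular $L^2$-mass of $\widehat{\mu_\beta}$: the Salem and Frostman upper bounds alone give only the easier direction, and the Frostman structure at the critical dimension is essential. The polylogarithmic losses introduced by the pigeonhole on divergent-sum scales are harmless, as the leading factor $R_k^{d/r-\alpha-\beta/2}$ is a strictly positive power of $N$ for every $r<p$.
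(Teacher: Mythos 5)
Your target profile --- a function whose Fourier transform has size $\simeq N^{-\beta/2}$ on a positive fraction of the annulus $|\xi|\sim N$ --- is exactly the one the paper aims for, but your route to it has a genuine gap at its central step. You claim that $I_\beta(\mu_\beta)=+\infty$ follows ``by a direct layer-cake computation using only the Frostman upper bound.'' It does not: the layer-cake identity $\int|x-y|^{-\beta}\,d\mu(y)=\beta\int_0^\infty\mu(B(x,r))\,r^{-\beta-1}\,dr$ combined with $\mu(B(x,r))\leq Cr^\beta$ produces a divergent \emph{upper} bound for the energy, which proves nothing. Nor does adding ``Hausdorff dimension exactly $\beta$'' rescue the implication: a set carrying a Frostman measure for the gauge $h(r)=r^\beta(\log(1/r))^{-2}$ has dimension exactly $\beta$, satisfies the $r^\beta$ Frostman bound, and yet has $I_\beta(\mu)\lesssim\int_0^1 r^{-1}(\log(1/r))^{-2}\,dr<\infty$. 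What actually forces $I_\beta(\mu)=\infty$ is the \emph{lower} regularity $\mu(B(x,r))\gtrsim r^\beta$ for $x\in\operatorname{supp}\mu$, which you never invoke. Since the divergence of the critical energy is the sole source of your annular $L^2$ lower bound, and everything downstream (the $1/k^2$ pigeonhole, the Chebyshev extraction of $A_{R_k}$) hangs on it, the argument as written does not close. It is repairable: the measures you name (surface measure on $S^{d-1}$, normalized Lebesgue measure on the cube, and Ahlfors-regular Salem constructions) are lower regular, so you should state and use that property explicitly. Two further points deserve care: the existence, for every non-integer $\beta$, of a Frostman measure with the clean decay $|\widehat{\mu_\beta}(\xi)|\lesssim|\xi|^{-\beta/2}$ is itself a substantial probabilistic input (the classical constructions lose an $\varepsilon$ in the exponent or a logarithm --- tolerable for your Chebyshev step, but not ``classical'' in the form stated); and the good scales $R_k$ are only known to be unbounded, not comparable to $N$, so you must choose $N$ along the subsequence $R_k$.

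It is also worth noting how different your route is from the paper's, and what that difference signifies. The paper builds $\Phi_N$ by hand: $\lfloor N^\beta\rfloor$ sparsely separated translates of $N^{d-\beta}\varphi(N\cdot)$, with $\widehat\varphi\equiv 1$ near the origin, summed with Rademacher signs; the deterministic identity $\sum_i|\widehat{\varphi_{N,j_i}}(\xi)|^2\simeq N^{-\beta}$ on $|\xi|\sim N$ is converted by Khintchine's inequality into the desired weak-$L^p$ lower bound for some choice of signs. This is elementary and self-contained, at the cost of producing signed objects rather than measures. Your construction, if completed, would yield nonnegative $\Phi_N$ of unit mass and hence would essentially resolve the paper's Question \ref{q2}, which the authors pose as open precisely because, outside the integer case treated in Theorem \ref{sharp_q}, obtaining lower bounds on the Fourier transform of a genuine Frostman measure is where the real difficulty lives. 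That your proof's only unjustified step sits exactly there is not a coincidence; it is the crux, not a formality.
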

	
	Next, we have the following theorem which shows that when $\beta =k \in \mathbb{N}\cap (0, d]$, not only does one have sharpness of the exponent  $\frac{2d}{2\alpha+k}$, even the second parameter in the Lorentz space cannot be improved.  
	\begin{theorem}\label{sharp_q}
		Let $k \in \mathbb{N}\cap (0, d-1]$ and $\alpha \in (0,d)$ be such that $\frac{d-k}{2}< \alpha < d-\frac{k}{2}$.  The measure $\mu_k= \mathcal{H}^{k}\llcorner{S^{k}} \in M_b \cap \dot{B}^{k-d}_{\infty,\infty}$ is such that
		\begin{align*}
			\|\mu_k\|_{L^{\frac{2d}{2\alpha+k}, q}} = +\infty.		\end{align*}
		for all $0< q<\infty$.
	\end{theorem}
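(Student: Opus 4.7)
The plan is to show the distribution function of $\widehat{I_\alpha \mu_k}$ is of exact order $t^{-p}$ with $p=2d/(2\alpha+k)$ as $t\to 0^+$, strengthening the upper bound already provided by Theorem \ref{thm-frac-Y-main}; standard Lorentz-space bookkeeping will then force $\|\widehat{I_\alpha \mu_k}\|_{L^{p,q}}=+\infty$ for every finite $q$.  First I would identify $\mathbb{R}^{k+1}$ with the subspace $\{x\in\mathbb{R}^d:x_{k+2}=\cdots=x_d=0\}$ so that $S^k\subset\mathbb{R}^{k+1}$, and, writing $\xi=(\xi',\xi'')\in\mathbb{R}^{k+1}\times\mathbb{R}^{d-k-1}$, observe $\widehat{\mu_k}(\xi)=\widehat{\sigma_k}(\xi')$ with $\sigma_k=\mathcal{H}^k\llcorner S^k$ viewed as a measure on $\mathbb{R}^{k+1}$.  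The classical identity $\widehat{\sigma_k}(\eta)=2\pi|\eta|^{-(k-1)/2}J_{(k-1)/2}(2\pi|\eta|)$ together with the Hankel asymptotic of $J_\nu$ supplies
\[
\widehat{\sigma_k}(\eta) \;=\; 2|\eta|^{-k/2}\cos\!\bigl(2\pi|\eta|-k\pi/4\bigr) + O(|\eta|^{-k/2-1}), \qquad |\eta|\to\infty.
\]

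Next I would use this to select, for every sufficiently large integer $n$, an interval $I_n\subset[n,n+1]$ of length bounded below by a positive absolute constant on which $|\cos(2\pi r-k\pi/4)|\geq 1/2$, and set $A_n=\{\eta\in\mathbb{R}^{k+1}:|\eta|\in I_n\}$, so that $|A_n|\gtrsim n^k$ and $|\widehat{\sigma_k}(\eta)|\geq c\,n^{-k/2}$ on $A_n$.  Forming the product sets $B_n=A_n\times B_{\mathbb{R}^{d-k-1}}(0,n)\subset\mathbb{R}^d$ (interpreted as $B_n=A_n$ when $k=d-1$), one has $|\xi|\lesssim n$ and
\[
|\widehat{I_\alpha \mu_k}(\xi)| \;=\; (2\pi|\xi|)^{-\alpha}|\widehat{\sigma_k}(\xi')| \;\geq\; c'\,n^{-\alpha-k/2}, \qquad \xi\in B_n.
\]
Given $t>0$ small, set $N_t=\lfloor(c'/t)^{2/(2\alpha+k)}\rfloor$, so that $B_n\subset\{|\widehat{I_\alpha \mu_k}|>t\}$ for every $n_0\leq n\leq N_t$.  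Because the $B_n$ are pairwise disjoint and $|B_n|\gtrsim n^k\cdot n^{d-k-1}=n^{d-1}$, summing yields
\[
\bigl|\{|\widehat{I_\alpha \mu_k}|>t\}\bigr| \;\geq\; \sum_{n=n_0}^{N_t}|B_n| \;\gtrsim\; N_t^{d} \;\gtrsim\; t^{-p}.
\]

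Finally, this lower bound on the distribution function implies $(\widehat{I_\alpha \mu_k})^{*}(s)\gtrsim s^{-1/p}$ for all $s$ beyond some $s_0>0$, so that $s^{1/p}(\widehat{I_\alpha \mu_k})^{*}(s)$ stays bounded below by a positive constant on $[s_0,\infty)$; the divergence of $\int_{s_0}^\infty ds/s$ then forces $\|\widehat{I_\alpha \mu_k}\|_{L^{p,q}}^q\geq c\int_{s_0}^{\infty}ds/s=+\infty$ for every $0<q<\infty$.  The main obstacle is the construction of the cosine-good annuli $I_n$: one must extract from the Hankel expansion a lower bound for the leading oscillatory term that is uniform in $n$ and dominates the $O(|\eta|^{-k/2-1})$ remainder on a set of positive one-dimensional measure per period.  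This is standard bookkeeping, but it must be executed carefully enough to preserve the sharp exponent $p=2d/(2\alpha+k)$ in the distribution function --- everything else in the proof is volume summation.
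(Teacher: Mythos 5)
Your proposal is correct and follows essentially the same route as the paper: both extract the leading oscillatory term from the Bessel asymptotics of $\widehat{\mu_k}$, lower-bound the distribution function of $\widehat{I_\alpha\mu_k}$ by $c\,t^{-2d/(2\alpha+k)}$ for small $t$ using the set where the cosine factor is bounded away from zero, and conclude from the divergence of $\int_0^c dt/t$ in the Lorentz quasinorm. The only cosmetic difference is that you organize the volume count as a sum over disjoint unit-width annular shells $B_n$, whereas the paper bounds in one step the measure of the ball-minus-cylinder intersected with the cosine-good set.
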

\noindent
Here, $S^{k} \subset \mathbb{R}^{k+1}$ is a $k$-dimensional unit sphere for any decomposition $\mathbb{R}^d=\mathbb{R}^{k+1} \times \mathbb{R}^{d-(k+1)}$ and $\mathcal{H}^{k}\llcorner S^{k}$ denotes the $k$-dimensional Hausdorff measure on $S^k$.
	
	The conclusion of Theorem \ref{sharp_q} is stronger than that in Theorem \ref{sharp_p} in two respects: first, we show that one is able to find a single measure for which the embedding fails;  second, for this measure we show this failure at the critical value for all finite values of the second Lorentz parameter.  The complication in the former counterexample is because we are not in the integer regime, where a direct computation of the decay of the Fourier transform of lower dimensional spheres yields the result.  Instead, we argue via a random construction and Khintchine's inequality.  This discrepancy prompts
	\begin{openquestion}\label{q1}
		Given $\beta \in (0, d]$, $\alpha \in (0,d)$ such that $\frac{d-\beta}{2}< \alpha < d-\frac{\beta}{2}$, and $1\leq q<+\infty$, can one find a sequence of functions $\Phi_N \in M_b \cap \dot{B}^{\beta-d}_{\infty,\infty}(\mathbb{R}^d)$ such that
		\begin{align*}
			\|\Phi_N\|_{M_b}, \|\Phi_N\|_{\dot{B}^{\beta-d}_{\infty,\infty}} \lesssim 1
		\end{align*}
		and
		\begin{align*}
			\|\widehat{I_{\alpha}\Phi_N}\|_{L^{\frac{2d}{2\alpha+\beta}, q}} \to \infty?
		\end{align*}
	\end{openquestion}
	
	A more challenging problem is whether the counterexample can be constructed using probability measures associated to sets in $\mathbb{R}^d$, or optimally a single probability measure.  One poses
	\begin{openquestion}\label{q2}
		In Theorem \ref{sharp_p} and Question \ref{q1}, can one take $\Phi_N = \mu_N$, for $\mu_N$ a probability measure on some set $E_N \subset \mathbb{R}^d$?
	\end{openquestion}
	
 Question \ref{q2} is about a measure that does not concentrate too much and whose Fourier transform is large in a sense. It intersects a problem of classical interest, the construction of sets with prescribed Fourier dimension. We broadly refer to Section~$3.6$ in~\cite{Mattila} and now give a brief outline. 
	
	We recall the Fourier dimension of a set $E$ is the supremum of all $\eta \in [0,d]$ such that
	\begin{align}\label{Fourier_dimension}
		|\widehat{\mu_E}(\xi)| \leq C |\xi|^{-\eta/2}
	\end{align} 
	for some non-trivial Borel measure for which $\operatorname*{supp} \mu_E \subset E$. The Fourier dimension is always smaller than or equal to the Hausdorff dimension of a set. A set is called a Salem set if the two dimensions coincide. The problem of construction of Salem sets is highly non-trivial, as the classical constructions of Salem \cite{Salem}, Kahane \cite{Kahane}, and Bluhm \cite{Bluhm} are probabilistic.  The more recent development of explicit constructions rely heavily on algebraic number theory, see Kaufman \cite{Kaufman}, Hambrook \cite{Hambrook,Hambrook1}, Fraser and Hambrook \cite{FraserHambrook}.  In particular, one observes that the question of Salem sets concerns constructing a measure in a Morrey space with small Fourier transform, small being quantified by \eqref{Fourier_dimension}.  By constrast, we seek the construction of a measure in a Morrey space with large Fourier transform, where large might take the form of a measure-theoretic analogue of the reverse inequality to \eqref{Fourier_dimension}, i.e.
	\begin{align*}
		| \{& \xi \in B(0,R) \setminus B(0,R/2) : |\widehat{\mu_E}(\xi)| >t  \}| \\
		&\quad \geq c_1 | \{ \xi \in B(0,R) \setminus B(0,R/2) : |\xi|^{-\eta/2} >c_2t  \}|, \quad t \in (0,1),
	\end{align*}
	for some $c_1,c_2>0$ and all $R>0$ sufficiently large.  One approach to address our Question \ref{q2} would be to consider the sharpness in these various constructions of Salem sets, though with an additional flexibility, as our problem does not require the upper bound that makes the constructions so technical.

	\section{Preliminaries}\label{preliminaries}
	In this section we recall several basic results that will be useful in the sequel.  	
	First, we have some standard estimates for the heat kernel $p_{t}$:
	\begin{align}\label{heat-kernel-est}
		\int_{\mathbb{R}^d} p_{t}(x)\, dx=1 \quad \text{and} \quad \int_{\mathbb{R}^d} |\nabla^k p_{t}(x)| \, dx \leq C_{d,k} t^{-k/2},
	\end{align}
	for some positive constant $ C_{d,k}$.  We also utilize a slight variant of the second estimate, that for $\eta \in (0,d)$ one has
	\begin{align}\label{heat-kernel-est-2}
		\int_{\mathbb{R}^d} |(-\Delta)^{\eta/2} p_{t}(x)| \, dx \leq C_{d,\eta} t^{-\eta/2},
	\end{align}
	for some positive constant $ C_{d,\eta}$.  These two estimates follow easily from scaling, once one verifies the summability of derivatives of the Gaussian.
	
	The following lemma gives examples of functions for which our results can be applied.
	\begin{lemma}\label{lem-higher-grad-measure}
		If $u \in L^{\infty}(\mathbb{R}^d)$, then for all $k \in \mathbb{N}\cap [0,d]$, $D(\nabla^{k-1}u) \in \dot{B}^{-k}_{\infty,\infty}(\mathbb{R}^d)$ and
		\begin{align*}
			\|D(\nabla^{k-1}u)\|_{\dot{B}^{-k}_{\infty,\infty}} \lesssim \|u\|_{\infty}.
		\end{align*}
	\end{lemma}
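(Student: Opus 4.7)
The plan is to push every derivative off the (distributional) object $D(\nabla^{k-1}u)$ and onto the heat kernel, after which the desired bound reduces to a direct application of Young's inequality together with the second estimate in \eqref{heat-kernel-est}.

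More precisely, the norm we need to control is
\[
\|D(\nabla^{k-1}u)\|_{\dot{B}^{-k}_{\infty,\infty}} = \sup_{t>0} t^{k/2}\,\|p_t \ast D(\nabla^{k-1}u)\|_{L^\infty(\mathbb{R}^d)},
\]
since, in the notation of the excerpt, $\beta-d=-k$ corresponds to $(d-\beta)/2 = k/2$. The first step is to observe that, as a tempered distribution, $D(\nabla^{k-1}u)$ consists of $k$-th order distributional partial derivatives of the bounded function $u$. Convolution with $p_t \in \mathcal{S}(\mathbb{R}^d)$ commutes with differentiation, so that for each component
\[
p_t \ast D(\nabla^{k-1}u) = (\nabla^k p_t) \ast u,
\]
where the right-hand side is an ordinary convolution because $\nabla^k p_t \in L^1(\mathbb{R}^d)$ and $u \in L^\infty(\mathbb{R}^d)$.

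The second step is Young's inequality: for any $t>0$ and each component of $\nabla^k p_t$,
\[
\|(\nabla^k p_t)\ast u\|_{L^\infty} \le \|\nabla^k p_t\|_{L^1}\,\|u\|_{L^\infty}.
\]
Invoking the heat kernel bound $\|\nabla^k p_t\|_{L^1} \le C_{d,k}\, t^{-k/2}$ from \eqref{heat-kernel-est}, we obtain
\[
\|p_t \ast D(\nabla^{k-1}u)\|_{L^\infty} \le C_{d,k}\, t^{-k/2}\,\|u\|_{L^\infty},
\]
so that multiplying by $t^{k/2}$ and taking the supremum over $t>0$ yields the claimed estimate.

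There is no real obstacle here; the only point that needs a line of justification is the commutation of the distributional derivatives with convolution against $p_t$, which is standard since $p_t$ is Schwartz. The role of the assumption $k \in \mathbb{N}\cap[0,d]$ is only that the heat-kernel estimate \eqref{heat-kernel-est} was stated for integer orders; for $k=0$ the statement degenerates to $\|p_t \ast u\|_\infty \le \|u\|_\infty$, which holds trivially.
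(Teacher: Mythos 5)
Your proposal is correct and follows exactly the paper's argument: move all $k$ derivatives onto the heat kernel via the distributional definition, apply Young's inequality, and invoke the bound $\|\nabla^k p_t\|_{L^1} \lesssim t^{-k/2}$ from \eqref{heat-kernel-est}. The only difference is that you spell out the (standard) justification for commuting the derivatives with the convolution, which the paper leaves implicit.
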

	\begin{proof}
		From the definition of the distributional derivatives $D(\nabla^{k-1}u)$ and the estimate \eqref{heat-kernel-est} we have
		\begin{align*}
			\|p_t \ast D(\nabla^{k-1}u)\|_{\infty} =  \|\nabla^k p_t \ast u\|_{\infty} \leq \|\nabla^k p_t\|_{1} \|u\|_{\infty} \lesssim &  t^{-k/2} \|u\|_{\infty},
		\end{align*}
		which implies the claim.		
	\end{proof}
	
	%A principle example of the preceding is the case of sets of finite perimeter:
	%	
	%	\begin{lemma}\label{lem-finite-peri}
	%		Let $E \subset \mathbb{R}^d$ such that $\chi_{E} \in \BV(\mathbb{R}^d)$. Then $D\chi_{E} \in \mathcal{Y}_{d-1}$
	%		and 
	%		\begin{align*}
	%		\|D\chi_E\|_{\mathcal{Y}_{d-1}} \lesssim 1
	%		\end{align*}
	%	\end{lemma}
	%	\begin{proof}
	%		The result follows from an application of the preceding lemma to $u=\chi_{E} \in \BV(\mathbb{R}^d)$, as $\chi_E \in L^\infty(\mathbb{R}^d)$ and $D\chi_{E} \in M_b(\mathbb{R}^d)$.                                                                                                                                                                                                                                                 
	%	\end{proof}
	
	One can also replace the integer derivatives with a fractional Laplacian:
	
	\begin{lemma}\label{lem-higher-fraclaplace-measure}
		If $u \in L^{\infty}(\mathbb{R}^d)$, then for all $\eta\in (0,d)$,  $(-\Delta)^{\eta/2}  u \in \dot{B}^{-\eta}_{\infty,\infty}$ and
		\begin{align*}
			\|(-\Delta)^{\eta/2}  u \|_{\dot{B}^{-\eta}_{\infty,\infty}} \lesssim \|u\|_{\infty}.
		\end{align*}
	\end{lemma}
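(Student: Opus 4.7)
The plan is to mimic Lemma~\ref{lem-higher-grad-measure} verbatim, replacing the integer-order derivative $\nabla^k p_t$ by the fractional Laplacian $(-\Delta)^{\eta/2} p_t$ and invoking \eqref{heat-kernel-est-2} in place of the second estimate in \eqref{heat-kernel-est}. By the definition of the Besov semi-norm, the target inequality reduces to showing
\[
\sup_{t>0} t^{\eta/2} \|p_t \ast (-\Delta)^{\eta/2} u\|_{\infty} \lesssim \|u\|_\infty.
\]

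First I would justify the commutation identity
\[
p_t \ast (-\Delta)^{\eta/2} u = ((-\Delta)^{\eta/2} p_t) \ast u
\]
in the sense of tempered distributions. Since $u \in L^\infty(\mathbb{R}^d) \subset \mathcal{S}'(\mathbb{R}^d)$, $p_t \in \mathcal{S}(\mathbb{R}^d)$, and $(-\Delta)^{\eta/2}$ is self-adjoint with respect to the $\mathcal{S}$-$\mathcal{S}'$ duality, testing against an arbitrary $\phi \in \mathcal{S}(\mathbb{R}^d)$ gives
\[
\langle p_t \ast (-\Delta)^{\eta/2} u, \phi\rangle = \langle u, (-\Delta)^{\eta/2}(p_t \ast \phi)\rangle = \langle u, ((-\Delta)^{\eta/2} p_t) \ast \phi \rangle,
\]
since convolution and fractional differentiation (viewed as the Fourier multiplier $(2\pi|\xi|)^\eta$) commute on Schwartz functions. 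The right-hand side is precisely the pairing of $((-\Delta)^{\eta/2} p_t) \ast u$ with $\phi$, and estimate \eqref{heat-kernel-est-2} guarantees that $(-\Delta)^{\eta/2} p_t \in L^1(\mathbb{R}^d)$, so this convolution is in fact an honest bounded function representing the distribution on the left.

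With the identity in hand, Young's convolution inequality together with \eqref{heat-kernel-est-2} yields
\[
\|((-\Delta)^{\eta/2} p_t) \ast u\|_{\infty} \leq \|(-\Delta)^{\eta/2} p_t\|_1 \|u\|_\infty \lesssim t^{-\eta/2} \|u\|_\infty,
\]
and multiplying by $t^{\eta/2}$ and taking the supremum over $t>0$ finishes the argument. I do not anticipate any genuine obstacle: the only non-routine point is transferring $(-\Delta)^{\eta/2}$ from $u$ to the heat kernel, and this step is essentially automatic once the $L^1$ bound \eqref{heat-kernel-est-2} is available. The lemma is really the fractional analogue of Lemma~\ref{lem-higher-grad-measure}, whose proof scheme transfers with no modifications beyond the substitution of \eqref{heat-kernel-est-2} for the integer-order estimate.
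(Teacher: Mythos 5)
Your argument is exactly the paper's proof: transfer $(-\Delta)^{\eta/2}$ onto the heat kernel via the distributional definition, then apply Young's inequality together with \eqref{heat-kernel-est-2}. The extra care you take in justifying the commutation against test functions is a welcome elaboration of a step the paper treats as immediate, but the route is identical.
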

	\begin{proof}
		From the distributional defintion of $(-\Delta)^{\eta/2}  u$ and \eqref{heat-kernel-est-2} we have
		\begin{align*}
			\|p_t \ast (-\Delta)^{\eta/2}  u\|_{\infty} = & \|(-\Delta)^{\eta/2}  p_t \ast u\|_{\infty}\\
			\leq & \|(-\Delta)^{\eta/2}  p_t\|_{1} \|u\|_{\infty}\\
			\lesssim &  t^{-\eta/2} \|u\|_{\infty},
		\end{align*}
		which implies the claim.		
	\end{proof}

	\section{Proof of Main results}\label{proofs}
	
	\begin{proposition}\label{Lq-estimate}
		Let $\mu$ be a finite Radon measure such that $\mu \in \dot{B}^{\beta-d}_{\infty,\infty}(\mathbb{R}^d)$. Then for $1\leq q \leq \infty$ and for all $t>0$, we have
		\begin{align*}
			t^{\frac{(d-\beta)}{2}(1-\frac{1}{q})} \|p_t\ast \mu \|_q \leq \|\mu\|_{M_b}^{\frac{1}{q}} \|\mu\|_{\dot{B}^{\beta-d}_{\infty,\infty}}^{1-\frac{1}{q}}.
		\end{align*}
	\end{proposition}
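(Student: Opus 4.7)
The plan is to prove this by log-convexity interpolation of the $L^q$ norms between the two endpoints $q=1$ and $q=\infty$. Both endpoint estimates are essentially immediate from the definitions, so the content of the proof is just combining them.

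First I would establish the endpoint $q=1$: since $\|p_t\|_1 = 1$ by \eqref{heat-kernel-est}, Young's inequality for the convolution of a measure with an $L^1$ function gives
\begin{align*}
\|p_t \ast \mu\|_1 \leq \|p_t\|_1 \|\mu\|_{M_b} = \|\mu\|_{M_b},
\end{align*}
and the weight $t^{(d-\beta)/2 \cdot (1-1/q)}$ vanishes to the zeroth power when $q=1$, so the claim at $q=1$ reduces exactly to this.

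Next I would record the endpoint $q=\infty$, which is literally the definition of the homogeneous Besov seminorm: for every $t>0$,
\begin{align*}
t^{(d-\beta)/2} \|p_t \ast \mu\|_\infty \leq \|\mu\|_{\dot{B}^{\beta-d}_{\infty,\infty}}.
\end{align*}

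Finally, for $1<q<\infty$, I would apply the standard log-convexity (H\"older) inequality for $L^q$ norms, namely $\|f\|_q \leq \|f\|_1^{1/q} \|f\|_\infty^{1-1/q}$, to $f = p_t \ast \mu$ (noting that $p_t \ast \mu$ is a bounded integrable function for each fixed $t>0$, so these norms are well-defined and finite). Substituting the two endpoint bounds yields
\begin{align*}
\|p_t \ast \mu\|_q \leq \|\mu\|_{M_b}^{1/q} \Bigl( t^{-(d-\beta)/2} \|\mu\|_{\dot{B}^{\beta-d}_{\infty,\infty}} \Bigr)^{1-1/q},
\end{align*}
and multiplying both sides by $t^{(d-\beta)/2 \cdot (1-1/q)}$ gives exactly the claimed inequality. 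There is no real obstacle here; the only thing to be careful about is the book-keeping on the exponent of $t$, which tracks linearly with $1-1/q$ so the interpolation is clean.
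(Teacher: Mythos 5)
Your proposal is correct and follows essentially the same argument as the paper: the $q=1$ bound via $\|p_t\|_1=1$ (the paper phrases this through Fubini), the $q=\infty$ bound directly from the Besov seminorm definition, and the log-convexity/H\"older interpolation $\|f\|_q \leq \|f\|_1^{1/q}\|f\|_\infty^{1-1/q}$ to conclude. The exponent book-keeping matches the paper's exactly.
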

	\begin{proof}
		As $\mu \in M_b(\mathbb{R}^d)$, by Fubini's theorem we have 
		\begin{align}\label{L1-esti}
			\|p_t\ast \mu \|_1 \leq \|p_t\|_1 \|\mu\|_{M_b} \leq \|\mu\|_{M_b}.
		\end{align}
		Since $\mu \in \dot{B}^{\beta-d}_{\infty,\infty}$, by definition we have 
		\begin{align}\label{L-infty-esti}
			\|p_t\ast \mu \|_{\infty} \leq   t^{\frac{\beta-d}{2}} \|\mu\|_{\dot{B}^{\beta-d}_{\infty,\infty}}
		\end{align}
		for all $t>0$.  The combination of the estimates \eqref{L1-esti} and \eqref{L-infty-esti} and H\"older's inequality yields, for $1<q<\infty$, the inequality
		\begin{align*}
			t^{\frac{(d-\beta)}{2}(1-\frac{1}{q})} \|p_t\ast \mu \|_q \leq \|\mu\|_{M_b}^{\frac{1}{q}} \|\mu\|_{\dot{B}^{\beta-d}_{\infty,\infty}}^{1-\frac{1}{q}} ,
		\end{align*}
		for all $t>0$.  This completes the proof.
	\end{proof}	
	
	\begin{lemma}\label{lem-L2-average}
		Let $\mu$ be a finite Radon measure such that $\mu \in \dot{B}^{\beta-d}_{\infty,\infty}$ for $\beta \in \mathbb{R}$. Then there exist a constant $C=C(d)$ such that 
		\begin{align}\label{L2-average}
			\frac{1}{R^{d-\beta}} \int_{B(0, R)} |\widehat{\mu}(\xi)|^2 \, d\xi \leq C \|\mu\|_{M_b} \|\mu\|_{\dot{B}^{\beta-d}_{\infty,\infty}}
		\end{align} 
		for all $R>0$.
	\end{lemma}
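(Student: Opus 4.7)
My plan is to use the heat kernel as a Fourier-side cutoff and then feed in Proposition \ref{Lq-estimate} with $q = 2$. The key observation is that the Fourier transform of $p_t$ is $\widehat{p_t}(\xi) = e^{-4\pi^2 t |\xi|^2}$, which is uniformly bounded below by a positive constant on $B(0,R)$ when we choose $t$ comparable to $R^{-2}$.

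First I would fix $t = 1/R^2$ (or any dimensionally equivalent choice) so that $\widehat{p_t}(\xi) \geq e^{-4\pi^2} =: c_0 > 0$ for every $\xi \in B(0,R)$. This gives the pointwise bound $|\widehat{\mu}(\xi)|^2 \leq c_0^{-2} |\widehat{p_t \ast \mu}(\xi)|^2$ on $B(0,R)$. Extending the integration from $B(0,R)$ to all of $\mathbb{R}^d$ and applying Plancherel's theorem then yields
\begin{align*}
\int_{B(0,R)} |\widehat{\mu}(\xi)|^2 \, d\xi \leq c_0^{-2} \int_{\mathbb{R}^d} |\widehat{p_t \ast \mu}(\xi)|^2 \, d\xi = c_0^{-2} \|p_t \ast \mu\|_2^2.
\end{align*}
Note that this step requires $p_t \ast \mu \in L^2(\mathbb{R}^d)$, which is exactly what Proposition \ref{Lq-estimate} provides.

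Next I would apply Proposition \ref{Lq-estimate} with $q = 2$, which gives
\begin{align*}
\|p_t \ast \mu\|_2^2 \leq t^{-(d-\beta)/2} \|\mu\|_{M_b} \|\mu\|_{\dot{B}^{\beta-d}_{\infty,\infty}}.
\end{align*}
Substituting $t = 1/R^2$ turns $t^{-(d-\beta)/2}$ into $R^{d-\beta}$, and dividing through by $R^{d-\beta}$ yields the desired inequality.

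There is no real obstacle here: the whole argument is the correct pairing between the Fourier-side concentration scale $|\xi| \lesssim R$ and the physical-side smoothing scale $\sqrt{t} \lesssim 1/R$, together with the $L^2$ interpolation already proved in Proposition \ref{Lq-estimate}. The only small point to be careful about is the Fourier-transform convention used for $p_t$, so that the choice $t \asymp 1/R^2$ indeed makes $\widehat{p_t}$ bounded below on $B(0,R)$; this is a matter of tracking the factor of $2\pi$ and does not affect the final form of the estimate.
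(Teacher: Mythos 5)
Your proposal is correct and is essentially identical to the paper's proof: both apply Proposition \ref{Lq-estimate} with $q=2$, use Plancherel to convert $\|p_t\ast\mu\|_2^2$ into a Gaussian-weighted integral of $|\widehat{\mu}|^2$, choose $t=1/R^2$, and use the lower bound $e^{-4\pi^2 t|\xi|^2}\geq e^{-4\pi^2}$ on $B(0,R)$. The only difference is the order of presentation, and your tracking of the $2\pi$ convention matches the paper's constant $e^{8\pi^2}$.
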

	\begin{proof}
		First note that for $q=2$, Proposition \ref{Lq-estimate} gives
		\begin{align*}
			t^{\frac{d-\beta}{4}} \|p_t \ast \mu\|_2 \leq \|\mu\|_{M_b}^{\frac{1}{2}} \|\mu\|_{\dot{B}^{\beta-d}_{\infty,\infty}}^{\frac{1}{2}}
		\end{align*}
		for all $t>0$.  The preceding estimate and  Plancherel formula's imply
		\begin{align*}
			t^{\frac{d-\beta}{2}} \int_{\mathbb{R}^d} e^{-8\pi^2 t|\xi|^2} |\widehat{\mu}(\xi)|^2 \, d\xi \leq \|\mu\|_{M_b} \|\mu\|_{\dot{B}^{\beta-d}_{\infty,\infty}}.
		\end{align*}
		We set $t=\frac{1}{R^2}$ in the above inequality to obtain
		\begin{align*}
			\frac{1}{R^{d-\beta}} \int_{\mathbb{R}^d} e^{-8\pi^2 \frac{|\xi|^2}{R^2}} |\widehat{\mu}(\xi)|^2 \, d\xi \leq  \|\mu\|_{M_b} \|\mu\|_{\dot{B}^{\beta-d}_{\infty,\infty}}.
		\end{align*}
		Since $e^{-8\pi^2} \leq e^{-8\pi^2 \frac{|\xi|^2}{R^2}}$ for $\xi \in B(0, R)$, we have
		\begin{align*}
			\frac{1}{R^{d-\beta}} \int_{B(0, R)} |\widehat{\mu}(\xi)|^2 \, d\xi
			&\leq  \frac{e^{8\pi^2}}{R^{d-\beta}}  \int_{B(0, R)} e^{-8\pi^2\frac{|\xi|^2}{R^2}} |\widehat{\mu}(\xi)|^2 \, d\xi \\
			&\leq   \,  \frac{e^{8\pi^2}}{R^{d-\beta}} \int_{\mathbb{R}^d} e^{-8\pi^2\frac{|\xi|^2}{R^2}} |\widehat{\mu}(\xi)|^2 \, d\xi \\
			&\leq  \, C \|\mu\|_{M_b} \|\mu\|_{\dot{B}^{\beta-d}_{\infty,\infty}}
		\end{align*}
		for all $R>0$.  This completes the proof the lemma.
	\end{proof}

	\begin{proof}[Proof of Theorem \ref{thm-frac-Y-main}]
		For any $N_{0} \in \mathbb{Z}$ fixed to be chosen later we have 
		\begin{align*}
			\left| \left\{ \xi \in \mathbb{R}^d : |\widehat{I_{\alpha}\mu}(\xi)| > \lambda \right\}\right| &\leq  \sum_{k=-\infty}^{N_0}  \left| \left\{ \xi \in B(0, 2^{k+1})\setminus B(0, 2^{k}) : |\widehat{I_{\alpha}\mu}(\xi)| > \lambda \right\}\right| \\
			&\quad+ \sum_{k=N_0+1}^{\infty}  \left| \left\{ \xi \in B(0, 2^{k+1})\setminus B(0, 2^{k}) : |\widehat{I_{\alpha}\mu}(\xi)| > \lambda \right\}\right| \\
			&:= I_1 +  I_2 .
		\end{align*}
		
		We first estimate $I_2$. By Chebyshev's inequality, we have 
		\begin{align*}
			I_2 &\leq  \frac{1}{\lambda^2} \sum_{k=N_0+1}^{\infty} \int_{B(0, 2^{k+1})\setminus B(0, 2^k)} |\widehat{I_{\alpha}\mu}(\xi)|^2 \, d\xi \\
			&=  (2\pi)^{-2\alpha}\frac{1}{\lambda^2} \sum_{k=N_0+1}^{\infty} \int_{B(0, 2^{k+1})\setminus B(0, 2^k)} |\xi|^{-2\alpha} |\widehat{\mu}(\xi)|^2 \, d\xi \\ 
			&\leq  (2\pi)^{-2\alpha} \frac{1}{\lambda^2} \sum_{k=N_0+1}^{\infty} 2^{-2k \alpha} \int_{B(0, 2^{k+1})} |\widehat{\mu}(\xi)|^2 \, d\xi.
		\end{align*}
		The preceding chain of inequalities in combination with inequality \eqref{L2-average} from Lemma \ref{lem-L2-average} yields 
		\begin{align*}
			I_2  &\leq C \frac{1}{\lambda^2} \sum_{k=N_0+1}^{\infty} 2^{-2k \alpha} 2^{k(d-\beta)}  \|\mu\|_{M_b} \|\mu\|_{\dot{B}^{\beta-d}_{\infty,\infty}}\\
			&\lesssim  \frac{1}{\lambda^2} 2^{-N_0(2\alpha-(d-\beta))}  \|\mu\|_{M_b} \|\mu\|_{\dot{B}^{\beta-d}_{\infty,\infty}},	
		\end{align*} 
		where in the last inequality we utilize the assumption that $\alpha > \frac{d-\beta}{2}$.
		
		To estimate $I_1$, again using Chebyshev's inequality, we write
		\begin{align*}
			I_1 \leq \frac{1}{\lambda} \sum_{k=-\infty}^{N_0} \int_{B(0, 2^{k+1})\setminus B(0, 2^k)} |\widehat{I_{\alpha}\mu}(\xi)| \, d\xi.
		\end{align*}
		This inequality, in combination with Holder's inequality and inequality \eqref{L2-average} from Lemma \ref{lem-L2-average}, yields
		\begin{align*}
			I_1&\lesssim   \frac{1}{\lambda} \sum_{k=-\infty}^{N_0} 2^{kd/2} \left(\int_{B(0, 2^{k+1})\setminus B(0, 2^k)} |\widehat{I_{\alpha}\mu}(\xi)|^2 \, d\xi \right)^{1/2} \\
			&\lesssim   \frac{1}{\lambda} \sum_{k=-\infty}^{N_0} 2^{kd/2} 2^{-k\frac{(2\alpha-(d-\beta))}{2}} \|\mu\|_{M_b}^{\frac{1}{2}} \|\mu\|_{\dot{B}^{\beta-d}_{\infty,\infty}}^{\frac{1}{2}}\\
			&\lesssim   \frac{1}{\lambda} 2^{-N_0\frac{(2\alpha-(2d-\beta))}{2}} \|\mu\|_{M_b}^{\frac{1}{2}} \|\mu\|_{\dot{B}^{\beta-d}_{\infty,\infty}}^{\frac{1}{2}},
		\end{align*}
		where the last inequality utilizes the hypothesis that $\alpha< d-\frac{\beta}{2}$.
		
		Combining the estimates for $I_1$ and $I_2$, for $\frac{d-\beta}{2}< \alpha < d-\frac{\beta}{2}$, we find
		\begin{align*}
			I_1+I_2 \lesssim \frac{1}{\lambda} 2^{-N_0\frac{(2\alpha-(2d-\beta))}{2}} \|\mu\|_{M_b}^{\frac{1}{2}} \|\mu\|_{\dot{B}^{\beta-d}_{\infty,\infty}}^{\frac{1}{2}} + \frac{1}{\lambda^2} 2^{-N_0(2\alpha-(d-\beta))}  \|\mu\|_{M_b} \|\mu\|_{\dot{B}^{\beta-d}_{\infty,\infty}}.
		\end{align*}
		In particular, the choice of $N_0$ such that $2^{N_0} \simeq \lambda^{-\frac{2}{2\alpha+\beta}} \|\mu\|_{M_b}^{\frac{1}{2\alpha+\beta}} \|\mu\|_{\dot{B}^{\beta-d}_{\infty,\infty}}^{\frac{1}{2\alpha+\beta}}$ yields the estimate
		\begin{align*}
			\left| \left\{ \xi \in \mathbb{R}^d : |\widehat{I_{\alpha}\mu}(\xi)| > \lambda \right\}\right| \leq C \lambda^{-\frac{2d}{2\alpha+\beta}} \|\mu\|_{M_b}^{\frac{d}{2\alpha+\beta}} \|\mu\|_{\dot{B}^{\beta-d}_{\infty,\infty}}^{\frac{d}{2\alpha+\beta}}. 	
		\end{align*}
		This shows $\widehat{I_{\alpha}\mu} \in L^{\frac{2d}{2\alpha+\beta}, \infty}(\mathbb{R}^d)$ and $\|\widehat{I_{\alpha}\mu}\|_{L^{\frac{2d}{2\alpha+\beta}, \infty}} \leq C \|\mu\|_{M_b}^{\frac{1}{2}} \|\mu\|_{\dot{B}^{\beta-d}_{\infty,\infty}}^{\frac{1}{2}}$, which is the claim of the theorem.
	\end{proof}

	\begin{proof}[Proof of Corollary \ref{finite-perimeter}]
		We first recall from \eqref{charBV_byparts} that $D\chi_E \in \dot{B}^{-1}_{\infty,\infty}(\mathbb{R}^d)$.  An application of Theorem \ref{thm-frac-Y-main} to $\mu= D\chi_{E}$ with $\alpha=1$ and $\beta=d-1$ yields
		\begin{align}\label{pre-esti}
			\|\widehat{I_1(D\chi_{E})}\|_{L^{\frac{2d}{d+1}, \infty}} \leq \, C |D\chi_E|(\mathbb{R}^d)^{1/2}.
		\end{align}
		Next we observe that $D\chi_{E} \in M_b(\mathbb{R}^d)$ implies $\widehat{D \chi_{E}}$ is a continuous function and therefore
		\begin{align}\label{sub-rep-form}
			\left|\widehat{\chi_{E}}(\xi)\right| = & \left|\frac{\xi}{|\xi|^2}\cdot \widehat{D \chi_{E}}(\xi)\right|\\
			\nonumber= & \left|\frac{\xi}{|\xi|}\cdot \frac{1}{|\xi|}\widehat{D \chi_{E}}(\xi)\right|\\
			\nonumber \leq & \left|\widehat{I_1(D\chi_{E})}(\xi)\right|.
		\end{align}
		In particular, \eqref{pre-esti} and \eqref{sub-rep-form} together imply the estimate
		\begin{align*}
			\|\widehat{\chi_{E}}\|_{L^{\frac{2d}{d+1}, \infty}} \leq \, C |D\chi_E|(\mathbb{R}^d)^{1/2},
		\end{align*}
		which completes the proof.
	\end{proof}

	\begin{proof}[Proof of Corollary \ref{Sobolev-function}]
		Let $\chi_{E} \in W^{\eta, p}$ with $0<\eta <1$, $1\leq p <\infty$ such that $\eta p <1$. Then by definition,
		\begin{align}\label{eq-comp-frac}
			\|(-\Delta)^{\eta p/2}\chi_{E}\|_{L^1} = & C_{d,\eta, p} \int \left| \int \frac{\chi_{E}(x)-\chi_{E}(y)}{|x-y|^{d+\eta p}} \, dy \right| dx \\
			\nonumber= & C_{d,\eta, p} \int  \int \frac{|\chi_{E}(x)-\chi_{E}(y)|^p}{|x-y|^{d+\eta p}} \, dy\, dx \\
			\nonumber = & C_{d,\eta, p} \|\chi_{E}\|_{W^{\eta, p}}^p,
		\end{align}
where one uses the fact that $\chi_E$ only takes the values $0$ and $1$.

		In particular, Lemma \ref{lem-higher-fraclaplace-measure} implies $(-\Delta)^{\eta p /2} \chi_E \in M_b \cap \dot{B}^{\beta-d}_{\infty,\infty}$ with $\beta=d-\eta p$.
		
		An application of Theorem \ref{thm-frac-Y-main} to $\mu=(-\Delta)^{\eta p/2} \chi_{E}$ with $\alpha= \eta p$ and $\beta= d-\eta p$ yields the estimate
		\begin{align}\label{preli-esti-frac}
			\|\reallywidehat{I_{\eta p} \left( (-\Delta)^{\eta p/2} \chi_{E} \right)} \|_{L^{\frac{2d}{d+\eta p},\infty}} \leq C \|\chi_{E}\|_{W^{\eta, p}}^{p/2}.
		\end{align}  
		The preceding inequality, in combination with the relation
		\begin{align}\label{reprent-frac}
			\widehat{\chi_{E}}(\xi) = \left(I_{\gamma p} \left( (-\Delta)^{\gamma p/2} \chi_{E}  \right)\right)\widehat{\phantom{X}} (\xi)
		\end{align}
		implies the desired result:
		\begin{align*}
			\|\widehat{\chi_{E}}\|_{L^{\frac{2d}{d+\eta p},\infty}} \leq C'\|\chi_{E}\|_{W^{\eta, p}}^{p/2}.
		\end{align*}
		This completes the proof of the corollary.	
		
		%
		%\begin{align*}
		%\reallywidehat{I_{\gamma p} \left( (-\Delta)^{\gamma p/2} \chi_{E} = \chi_{E}(x)
		%\end{align*} 
		
		%we write
		%	

		%		Now, using \eqref{eq-comp-frac} and \eqref{heat-kernel-est}, we have the folllowing estimates
		%		
		%		\begin{align*}
		%			\|p_t \ast (-\Delta)^{\gamma p/2}\chi_{E}\|_{1}	\leq \| (-\Delta)^{\gamma p/2}\chi_{E}\|_{1} \leq C_{d,\gamma, p} \|\chi_{E}\|_{W^{\gamma, p}}^p,
		%		\end{align*}
		%		and 
		%		\begin{align*}
		%			\|p_t \ast (-\Delta)^{\gamma p/2}\chi_{E}\|_{\infty} \leq \|(-\Delta)^{\gamma p/2} p_t\|_{\infty} \leq C_{d,\gamma, p} t^{-\gamma p/2}.	
		%		\end{align*}
		%		
		%		Again, using interpolation, for $1<q<\infty$, we have following estimates
		%		\begin{align}\label{inter-fract}
		%			t^{\frac{\gamma p}{2}(1-1/q)} \|p_t \ast (-\Delta)^{\gamma p/2}\chi_{E}\|_{q} \leq C_{d,\gamma,p,q} \|\chi_{E}\|_{W^{\gamma, p}}^{p/q}.
		%		\end{align}
		
		%		
		%		
		%		Using the estimate \eqref{inter-fract} and the 
		%		
		%				
		%		Therefore, estimates \eqref{reprent-frac} and  \eqref{preli-esti-frac} together imply
		%		

	\end{proof}

	\begin{proof}[Proof of Theorem \ref{sharp_p}] 
		Let $0<\beta \leq d$ and let $\varphi \in \mathcal{S}(\mathbb{R}^d)$ be such that $\operatorname*{supp} \widehat{\varphi} \subset B(0,4)$ and $\widehat{\varphi}\equiv 1$ on $B(0,2)$.  We first obtain a bound for $\sup_{t>0} t^{\frac{d-\beta}{2}} \|p_t\ast \varphi\|_{L^{\infty}}$.  For $t>1$, we have
		\begin{align}\label{large-t}
			t^{\frac{d-\beta}{2}} \|p_t\ast \varphi\|_{L^{\infty}} \leq  t^{\frac{d}{2}} \|p_t\|_{L^{\infty}} \|\varphi\|_{L^{1}} \leq C^{\prime},
		\end{align}
		where $C^{\prime}$ is a positive constant independent of $t$. For $0<t<1$, we have
		\begin{align}\label{small-t}
			t^{\frac{d-\beta}{2}} p_t\ast \varphi(x)=& t^{\frac{d-\beta} {2}} \int_{\mathbb{R}^d} \frac{1}{(4\pi t)^{d/2}} e^{-\frac{|x-y|^2}{4t}}  \varphi(y) \, dy\\
			\nonumber = & t^{\frac{d-\beta}{2}} \sum_{k\in \mathbb{Z}}\int_{B(0,2^{k+1}\sqrt{t}) \setminus B(0,2^{k}\sqrt{t})} \frac{1}{(4\pi t)^{d/2}} e^{-\frac{|x-y|^2}{4t}}  \varphi(y) \, dy\\
			\nonumber  \leq & C t^{\frac{d-\beta}{2}} \sum_{k\in \mathbb{Z}} \frac{1}{(4\pi t)^{d/2}} e^{-2^{2k-2}} (2^k\sqrt{t})^{d}\|\varphi\|_{L^\infty(\mathbb{R}^d)} \\
			\nonumber \leq & C \sum_{k\in \mathbb{Z}} e^{-2^{2k-2}} (2^k)^{d}\|\varphi\|_{L^\infty(\mathbb{R}^d)}
			\leq  C^{\prime \prime},
		\end{align}
		where $C^{\prime\prime}$ is a positive constant independent of $t$.   In particular, \eqref{large-t} and \eqref{small-t} imply that $\|\varphi\|_{\dot{B}_{\infty,\infty}^{\frac{\beta-d}{2}}} \lesssim 1$.
		
		Let $N$ be a large positive number and define a family of scalings and translates of $\varphi$ by
		\begin{align*}
			\varphi_{N,j}(x)= N^{d-\beta} \varphi(N(x-j)),\quad j\in \mathbb{Z}^d.
		\end{align*}
		
		Observe that 
		\begin{align*}
			t^{\frac{d-\beta}{2}} \|p_t \ast \varphi_{N,j}\|_{L^{\infty}} = s^{\frac{d-\beta}{2}} \|p_{s} \ast \varphi\|_{L^{\infty}}
		\end{align*}
		where $s=tN^2$, so that $\varphi_{N,j} \in \dot{B}^{\beta-d}_{\infty,\infty}$ for each $j\in \mathbb{Z}^d$, while
		\begin{align*}
			\int_{\mathbb{R}^d} |\varphi_{N,j}(x)| \, dx = N^{-\beta}\|\varphi\|_{L^1}.
		\end{align*}
		
		We next define 
		\begin{align*}
		\varphi_N= \sum_{i=1}^{\lfloor N^{\beta} \rfloor} \varphi_{N,j_i} ,
		\end{align*}
		where the set $\{j_i\}_{i=1}^{\lfloor N^{\beta}\rfloor}$ is sparse in the sense that for every $i_1,i_2 \in \{1, \ldots , \lfloor N^{\beta}\rfloor \}$, $|j_{i_1}-j_{i_2}| >e^{e^N}$.   It follows that
		\begin{align*}
			& \int_{\mathbb{R}^d} |\varphi_{N}(x)| \, dx \leq C_1 \quad \text{and} \\
			& \quad \sup_{t>0} t^{\frac{d-\beta}{2}}\|p_t \ast \varphi_N\|_{L^{\infty}} \leq C_2\max_{1\leq i \leq \lfloor N^{\beta} \rfloor } \sup_{t>0} t^{\frac{d-\beta}{2}}\|p_t \ast \varphi_{N, j}\|_{L^{\infty}} \leq  C_3
		\end{align*}
		where $C_1, C_2,C_3$ are positive constants independent of $N$.  In particular $\varphi_N \in M_b \cap \dot{B}^{\beta-d}_{\infty,\infty}$ for all $N$ and $\|\varphi_N\|_{M_b} , \|\varphi_N\|_{\dot{B}^{\beta-d}_{\infty,\infty}} \leq C_4$ where $C_4$ is positive constant independent of $N$. 
		
		Let $\{r_i(t): t \in [0,1]\}_{i=1}^{\infty}$ be a collection of Rademacher functions and define the random function
		\begin{align*}
			\Phi_{N}(t, x)= \sum_{i=1}^{\lfloor N^{\beta} \rfloor} r_i(t) \varphi_{N,j_i}(x).
		\end{align*}
		The preceding computations concerning $\varphi_N$ and the fact that $|r_i|\leq 1$ for all $i$ yield also that $\|\Phi_N\|_{M_b}, \|\Phi_N\|_{\dot{B}^{\beta-d}_{\infty,\infty}}\leq C_4$ for $C_4$ as before.
		
		Set $U_N= B(0, 2N)\setminus B(0,N)$. Since  $\widehat{\phi_{N,j_i}}(\xi)= N^{-\beta} \widehat{\phi}(\xi/N) e^{2\pi i\xi\cdot j_i}$, on $U_N$, we have $|\widehat{\varphi_{N,j_i}}| = N^{-\beta}$. Using Minkowski's inequality and Khintchine's inequality, we deduce
		\begin{align}\label{Khinctchine-weak-Lp}
			& \| \|\widehat{I_\alpha\Phi_N}(t, \xi) \|_{L^{p, \infty}(d\xi)} \|_{L^1([0,1], dt)}\\
			\nonumber & \geq \| \|\widehat{I_\alpha\Phi_N}(t, \xi)\|_{L^1([0,1], dt)} \|_{L^{p, \infty}(d\xi)} \\
			\nonumber	& \simeq \left\|  \left( \sum_{i=1}^{\lfloor N^{\beta} \rfloor}  |\xi|^{-2\alpha}|\widehat{\varphi_{N,j_i}}(\xi)|^2\right)^{1/2}  \right\|_{L^{p, \infty}(d\xi)}\\
			\nonumber	& \gtrsim \sup \lambda \left|\left\{ \xi \in U_N : |\xi|^{-\alpha} N^{-\beta/2}>\lambda\right\} \right|  ^{1/p} \\
			\nonumber	& \simeq\sup \lambda \left|\left\{ \xi \in U_N :  N^{-\alpha-\beta/2}>\lambda\right\}\right|^{1/p} \\
			\nonumber	& \simeq N^{-\alpha-\beta/2} \sup \lambda \left|\left\{ \xi \in U_N :  \lambda<1\right\}\right|^{1/p}\\
			\nonumber	&\gtrsim N^{-\alpha-\beta/2} |U_N|^{1/p}\\
			\nonumber	& \simeq N^{d/p-\alpha-\beta/2}.
		\end{align}
		
		In the third line, we used the fact that on the annulus $U_N$, we have $ \sum_{i=1}^{\lfloor N^{\beta} \rfloor} |\widehat{\varphi_{N,j_i}}(\xi)|^2\simeq N^{-\beta}$, which follows from the fact that $|\widehat{\varphi_{N,j_i}}| = N^{-\beta}$ for each $i$.
		
		For any choice of $p<\frac{2d}{2\alpha+\beta}$, the right hand sight of \eqref{Khinctchine-weak-Lp} tends to infinity as $N$ tends to $\infty$.  Thus, from the left hand side of \eqref{Khinctchine-weak-Lp} one can find a sequence ${t_N}$ with $0\leq t_N\leq 1$ such that 
		\begin{align*}
			\lim_{N\rightarrow \infty}  \|\widehat{I_\alpha\Phi_N}(t_N, \cdot) \|_{L^{p, \infty}(d\xi)} = \infty.
		\end{align*}
		This completes the demonstration of the theorem.
	\end{proof}

		\begin{proof}[Proof of Theorem \ref{sharp_q}]

			Let $k\in \mathbb{N}$ with $0<k\leq d-1$ and $\alpha \in \left( \frac{d-k}{2}, d-\frac{k}{2}\right)$.  For $\xi = (\xi',\xi'') \in \mathbb{R}^{k+1} \times \mathbb{R}^{d-(k+1)}$ one has 
			\begin{align}\label{Fourier-trans-measure}
				\widehat{\mu_k}(\xi)= 2\pi |\xi'|^{-\frac{k-1}{2}} \mathcal{J}_{\frac{k-1}{2}}(2\pi |\xi'|)
			\end{align}
			where $\mathcal{J}_{\frac{k-1}{2}}$ is the Bessel function of order $\frac{k-1}{2}$, see e.g. ~ \cite[Appendix B.4]{Grafakos1class}.  Moreover, one notes that the asymptotic behavior of the Bessel function is given by
			\begin{align}\label{assymp-Bessel}
				\mathcal{J}_{\frac{k-1}{2}}(|\xi'|)= \sqrt{\frac{2}{\pi |\xi'|}} \cos \left( |\xi'|-\frac{\pi(k-1)}{4}-\frac{\pi}{4}\right) + R_{\frac{k-1}{2}}(|\xi'|)
			\end{align}
			where $R_{\frac{k-1}{2}}$ satisfies $|R_{\frac{k-1}{2}}(|\xi'|)|\leq C |\xi'|^{-3/2}$ for $|\xi'|\geq 1$, see \cite[ Appendix B.8]{Grafakos1class}.  Finally, it is useful for us to observe that for $|\xi'| \geq 10C=:C'$ one has $|R_{\frac{k-1}{2}}(|\xi'|)| \leq \frac{1}{|\xi'|^{1/2}} \times \frac{C}{|\xi'|}< \frac{1}{10}\frac{1}{|\xi'|^{1/2}}$.
			
We are now prepared to establish the claimed lower bound.   Let $r\geq 1$ and $0<t<\infty$.  Then equations \eqref{Fourier-trans-measure} and \eqref{assymp-Bessel} allow us to estimate	
\begin{equation}\label{est-1}
\begin{aligned}
				 \left|\left\{ \xi : |\widehat{I_{\alpha}\mu_k}(\xi)|  \right. \right. &\left. \left. >t  \right\} \right|^{1/r} \\
				 = & \left| \left\{\xi : (2\pi)^{1-\alpha}  |\xi|^{-\alpha} |\xi'|^{-\frac{k-1}{2}}|\mathcal{J}_{\frac{k-1}{2}}(2\pi |\xi'|)|>t \right\}  \right|^{1/r}\\
				 \geq & \left| \left\{\xi: |\xi'|\geq C' , (2\pi)^{1-\alpha}\sqrt{\frac{2}{\pi}} |\xi'|^{-\alpha-\frac{k}{2}}\times\right.\right.\\
				 & \quad \quad \left.\left. \left(|\cos(2\pi |\xi'|-\frac{\pi(k-1)}{4}-\frac{\pi}{4})|-\frac{C}{|\xi'|}\right)>t \right\} \right|^{1/r}\\
				 \geq & \left| \left\{\xi :  |\xi'|\geq C', |\xi|^{-\alpha-\frac{k}{2}}> 10(2\pi)^{\alpha-1}\sqrt{\frac{\pi}{2}}t \right\} \right.\\
				 &\left.\bigcap \left\{\xi' \in \mathbb{R}^{k+1} :  |\xi'|\geq C', \left(|\cos(2\pi |\xi'|-\frac{\pi(k-1)}{4}-\frac{\pi}{4})|-\frac{C}{|\xi'|}\right)>\frac{1}{10} \right\}  \right|^{1/r}\\
				 \geq & \left| \left\{\xi :  |\xi'|\geq C', |\xi|^{-\alpha-\frac{k}{2}}> A_{\alpha}t \right\} \right.\\
				 &\left.\bigcap \left\{\xi' \in \mathbb{R}^{k+1} :  |\xi'|\geq C', |\cos(2\pi |\xi'|-\frac{\pi(k-1)}{4}-\frac{\pi}{4})|>\frac{1}{5} \right\}  \right|^{1/r}\\
				 \geq & C_{k} \left|\left\{\xi :  |\xi'|\geq C', |\xi|^{-\alpha-\frac{k}{2}}> A_{\alpha}t \right\} \right|^{1/r}
\end{aligned}
\end{equation}
where $A_{\alpha}= 10(2\pi)^{\alpha-1}\sqrt{\frac{\pi}{2}}$. 
In the last inequality, we have used the fact that the volume of the ball with cylinder removed
\begin{equation}\label{ball-minus-cylinder}
\left\{\xi :  |\xi'|\geq C', |\xi|^{-\alpha-\frac{k}{2}}> A_{\alpha}t \right\}
\end{equation}
which intersects the union of cylindrical annuli
\begin{equation*}
\left\{\xi' \in \mathbb{R}^{k+1} :  |\xi'|\geq C', |\cos(2\pi |\xi'|-\frac{\pi(k-1)}{4}-\frac{\pi}{4})|>\frac{1}{5} \right\}
\end{equation*}
is comparable to the volume of \eqref{ball-minus-cylinder} itself.
 
We next establish a lower bound for \eqref{ball-minus-cylinder}, for sufficiently small $t>0$:
			\begin{align}\label{esti-2}
				 \left|\left\{\xi :  |\xi'|\geq C',\right. \right. &\left. \left.  |\xi|^{-\alpha-\frac{k}{2}}  > A_{\alpha}t \right\} \right|^{1/r}\\
				\nonumber = & \left| B\left(0, A_{\alpha}^{-\frac{1}{\alpha+\frac{k}{2}}}t^{-\frac{1}{\alpha+\frac{k}{2}}} \right)\setminus \left\{\xi \in B\left(0, A_{\alpha}^{-\frac{1}{\alpha+\frac{k}{2}}}t^{-\frac{1}{\alpha+\frac{k}{2}}} \right): |\xi'|\leq C' \right\}\right|^{1/r}\\
				\nonumber \geq & \left| B\left(0, A_{\alpha}^{-\frac{1}{\alpha+\frac{k}{2}}}t^{-\frac{1}{\alpha+\frac{k}{2}}} \right)\right|^{1/r}- \left|\left\{\xi \in B\left(0, A_{\alpha}^{-\frac{1}{\alpha+\frac{k}{2}}}t^{-\frac{1}{\alpha+\frac{k}{2}}} \right): |\xi'|\leq C' \right\}\right|^{1/r}\\
				\nonumber \geq &  A_{\alpha}^{-\frac{2d}{2\alpha+k}\cdot \frac{1}{r}}t^{-\frac{2d}{2\alpha+k}\cdot\frac{1}{r}} \left( \omega_{d}^{1/r} - 2^{d/r}(C')^{\frac{k+1}{r}} A_{\alpha}^{\frac{2k+2}{2\alpha+k}\cdot\frac{1}{r}}t^{\frac{2k+2}{2\alpha+k}\cdot\frac{1}{r}} \right),
			\end{align}
			where $\omega_d$	is volume of the unit ball in $\mathbb{R}^d$.
			
			Set $p=\frac{2d}{2\alpha+k}$ and let $q<\infty$.  Using the definition of $L^{p,q}(\mathbb{R}^d)$, we write
			\begin{align*}
				\|\widehat{I_{\alpha}\mu_k}\|_{L^{p, q}}^{q}
				= \int_{0}^{\infty} \left( t \left|\left\{\xi : |\widehat{I_{\alpha}\mu_k}(\xi)|>t \right\} \right|^{1/p} \right)^{q} \frac{dt}{t}.
			\end{align*}
			The estimates \eqref{esti-2} and  \eqref{est-1} with $r=p=\frac{2d}{2\alpha+k}$ give a lower bound for the right hand side of the above the expression:  		\begin{align*}
				\|\widehat{I_{\alpha}\mu_k}\|_{L^{p, q}}^{q}
				%\geq &  \int_{0}^{\infty} \left( t I_1(t,p)-tI_2(t,p) \right)^{q} \frac{dt}{t}\\
				\geq & \frac{\omega_{d}^{q/r} A_\alpha^{-q}}{2^q} \int_{0}^{c}   \frac{dt}{t}
				\end{align*}
			for any $c$ such that
\begin{align*}
\left( \omega_{d}^{1/r} - 2^{d/r}(C')^{\frac{k+1}{r}} A_{\alpha}^{\frac{2k+2}{2\alpha+k}\cdot\frac{1}{r}}t^{\frac{2k+2}{2\alpha+k}\cdot\frac{1}{r}} \right) \geq \frac{\omega_{d}^{1/r}}{2}
\end{align*}
for all $0<t\leq c$.  As the integral in the right hand side of the above inequality is infinite for any $0<q<\infty$, we conclude that  
			\begin{align*}
				\|\widehat{I_{\alpha}\mu_k}\|_{L^{p, q}}= \infty 
			\end{align*}
			for $p=\frac{2d}{2\alpha+k}$ and $0<q<\infty$.

		\end{proof}

	\section*{Acknowledgments} 
	R.~Basak is supported by the National Science and Technology Council of Taiwan under research grant numbers 113-2811-M-003-007/113-2811-M-003-039.	
	D. Spector is supported by the National Science and Technology Council of Taiwan under research grant numbers 113-2115-M-003-017-MY3 and the Taiwan Ministry of Education under the Yushan Fellow Program.
	D. Stolyarov is supported by the Russian Science Foundation grant 19-71-30002.

	\begin{bibdiv}
		\begin{biblist}
			\bib{BG}{article}{
				AUTHOR = {Beretti, Thomas}
				AUTHOR= {Gennaioli, Luca},
				TITLE = {Fourier transform of BV functions, isoperimetry, and discrepancy theory},
				JOURNAL = {preprint, arXiv:2407.13347},
				FJOURNAL = {Arxiv},
				URL = {https://doi.org/10.48550/arXiv.2407.13347},
			}	
			
			\bib{Bluhm}{article}{
				author = {C. Bluhm},
				title = {Random recursive construction of {S}alem sets},
				journal = {Ark. Mat.},
				volume = {34},
				year = {1996},
				pages = {51--63}
			}	
			
			\bib{BCT}{article}{
				author={Brandolini, Luca},
				author={Colzani, Leonardo},
				author={Travaglini, Giancarlo},
				title={Average decay of Fourier transforms and integer points in
					polyhedra},
				journal={Ark. Mat.},
				volume={35},
				date={1997},
				number={2},
				pages={253--275},
				issn={0004-2080},
				review={\MR{1478780}},
				doi={10.1007/BF02559969},
			}
			
			\bib{FraserHambrook}{article}{
				author={Fraser, Robert},
				author={Hambrook, Kyle},
				title={Explicit Salem sets in $\Bbb{R}^n$},
				journal={Adv. Math.},
				volume={416},
				date={2023},
				pages={Paper No. 108901, 23},
				issn={0001-8708},
				review={\MR{4548424}},
				doi={10.1016/j.aim.2023.108901},
			}
			
			\bib{Grafakos1class}{book}{
				author={Grafakos, Loukas},
				title={Classical Fourier analysis},
				series={Graduate Texts in Mathematics},
				volume={249},
				edition={3},
				publisher={Springer, New York},
				date={2014},
				pages={xviii+638},
				isbn={978-1-4939-1193-6},
				isbn={978-1-4939-1194-3},
				review={\MR{3243734}},
				doi={10.1007/978-1-4939-1194-3},
			}
			
			\bib{Hambrook}{article}{
				author={Hambrook, Kyle},
				title={Explicit Salem sets in $\Bbb{R}^2$},
				journal={Adv. Math.},
				volume={311},
				date={2017},
				pages={634--648},
				issn={0001-8708},
				review={\MR{3628226}},
				doi={10.1016/j.aim.2017.03.009},
			}
			
			\bib{Hambrook1}{article}{
				author={Hambrook, Kyle},
				title={Explicit Salem sets and applications to metrical Diophantine
					approximation},
				journal={Trans. Amer. Math. Soc.},
				volume={371},
				date={2019},
				number={6},
				pages={4353--4376},
				issn={0002-9947},
				review={\MR{3917225}},
				doi={10.1090/tran/7613},
			}
			
			\bib{Herz}{article}{
				author={Herz, C. S.},
				title={Fourier transforms related to convex sets},
				journal={Ann. of Math. (2)},
				volume={75},
				date={1962},
				pages={81--92},
				issn={0003-486X},
				review={\MR{0142978}},
				doi={10.2307/1970421},
			}
			
			\bib{Kahane}{article}{
				author={Kahane, Jean-Pierre},
				title={Images d'ensembles parfaits par des s\'eries de Fourier
					gaussiennes},
				language={French},
				journal={C. R. Acad. Sci. Paris S\'er. A-B},
				volume={263},
				date={1966},
				pages={A678--A681},
				issn={0151-0509},
				review={\MR{0212888}},
			}
			\bib{Kaufman}{article}{
				author={Kaufman, R.},
				title={On the theorem of Jarn\'ik and Besicovitch},
				journal={Acta Arith.},
				volume={39},
				date={1981},
				number={3},
				pages={265--267},
				issn={0065-1036},
				review={\MR{0640914}},
				doi={10.4064/aa-39-3-265-267},
			}
			
			\bib{KL}{article}{
				AUTHOR = {Ko, Hyerim}
				AUTHOR= {Lee, Sanghyuk},
				TITLE = {Fourier transform and regularity of characteristic functions},
				JOURNAL = {Proc. Amer. Math. Soc.},
				FJOURNAL = {Proceedings of the American Mathematical Society},
				VOLUME = {145},
				YEAR = {2017},
				NUMBER = {3},
				PAGES = {1097--1107},
				ISSN = {0002-9939,1088-6826},
				MRCLASS = {42B25 (42B15)},
				MRNUMBER = {3589310},
				MRREVIEWER = {Steven\ Michael\ Heilman},
				DOI = {10.1090/proc/13435},
				URL = {https://doi.org/10.1090/proc/13435},
			}
			\bib{Mattila}{book}{
				author={Mattila, Pertti},
				title={Fourier analysis and Hausdorff dimension},
				series={Cambridge Studies in Advanced Mathematics},
				volume={150},
				publisher={Cambridge University Press, Cambridge},
				date={2015},
				pages={xiv+440},
				isbn={978-1-107-10735-9},
				review={\MR{3617376}},
				doi={10.1017/CBO9781316227619},
			}

			%			
			%			\bib{Lebedev}{article}{
			%				AUTHOR = {Lebedev, V. V.},
			%				TITLE = {On the {F}ourier transform of the characteristic functions of
			%					domains with a {$C^1$}-smooth boundary},
			%				JOURNAL = {Funktsional. Anal. i Prilozhen.},
			%				FJOURNAL = {Funktsional\cprime ny\u i\ Analiz i ego Prilozheniya},
			%				VOLUME = {47},
			%				YEAR = {2013},
			%				NUMBER = {1},
			%				PAGES = {33--46},
			%				ISSN = {0374-1990,2305-2899},
			%				MRCLASS = {42B10 (42A38)},
			%				MRNUMBER = {3087830},
			%				MRREVIEWER = {Boris\ Petrovich\ Osilenker},
			%				DOI = {10.1007/s10688-013-0004-1},
			%				URL = {https://doi.org/10.1007/s10688-013-0004-1},
			%			}
			
			\bib{PeetreBook}{book}{
				author={Peetre, Jaak},
				title={New thoughts on Besov spaces},
				series={Duke University Mathematics Series},
				volume={No. 1},
				publisher={Duke University, Mathematics Department, Durham, NC},
				date={1976},
				pages={vi+305},
				review={\MR{0461123}},
			}
			
			\bib{Salem}{article}{
				author={Salem, R.},
				title={On singular monotonic functions whose spectrum has a given
					Hausdorff dimension},
				journal={Ark. Mat.},
				volume={1},
				date={1951},
				pages={353--365},
				issn={0004-2080},
				review={\MR{0043249}},
				doi={10.1007/BF02591372},
			}
			
			\bib{Sickel}{article}{
				author={Sickel, Winfried},
				title={On the regularity of characteristic functions},
				conference={
					title={Anomalies in partial differential equations},
				},
				book={
					series={Springer INdAM Ser.},
					volume={43},
					publisher={Springer, Cham},
				},
				isbn={978-3-030-61346-4},
				isbn={978-3-030-61345-7},
				date={[2021] \copyright 2021},
				pages={395--441},
				review={\MR{4219211}},
			}

			\bib{Stein}{book}{
				author={Stein, Elias M.},
				title={Singular integrals and differentiability properties of functions},
				series={Princeton Mathematical Series},
				volume={No. 30},
				publisher={Princeton University Press, Princeton, NJ},
				date={1970},
				pages={xiv+290},
				review={\MR{0290095}},
			}
			\bib{Triebel}{book}{ 
				author={Triebel, Hans}, 
				title={Theory of function spaces. II}, 
				series={Monographs in Mathematics}, 
				volume={84}, 
				publisher={Birkh\"auser Verlag, Basel}, 
				date={1992}, 
				pages={viii+370}, 
				isbn={3-7643-2639-5}, 
				review={\MR{1163193}}, 
				doi={10.1007/978-3-0346-0419-2}, }
			
		\end{biblist}
	\end{bibdiv}
	
\end{document}